\theoremstyle{definition} 
\newtheorem{defi}{Definition}[section]
\newtheorem{remark}[defi]{Remark}
\theoremstyle{plain}
\newtheorem{lemma}[defi]{Lemma}
\newtheorem{theorem}[defi]{Theorem}
\newtheorem{corollary}[defi]{Corollary}
\newcommand{\myproduo}[2]{\prod\limits_{#1}^{#2}}
\newcommand{\myspan}{\text{span}\;}
\newcommand{\myotimes}[2]{\bigotimes\limits_{#1}^{#2}}
\numberwithin{equation}{section} 
\begin{document}


\pagestyle{myheadings}


\title{Product kernels are efficient and flexible tools for high-dimensional scattered data interpolation}

\date{\today}

\author{\large
Kristof Albrecht\footnote{Universit\"at Hamburg, Dept of Mathematics, {\tt kristof.albrecht@studium.uni-hamburg.de}} \qquad
Juliane Entzian\footnote{Universit\"at Hamburg, Department of Mathematics, {\tt juliane.entzian@uni-hamburg.de}} \qquad
Armin Iske\footnote{Universit\"at Hamburg, Department of Mathematics, {\tt armin.iske@uni-hamburg.de}}
}

\markboth{\footnotesize \rm \hfill K.~ALBRECHT, J.~ENTZIAN, AND A.~ISKE \hfill}
{\footnotesize \rm \hfill PRODUCT KERNELS ARE EFFICIENT AND FLEXIBLE TOOLS\hfill}

\maketitle
\thispagestyle{plain}


\begin{abstract}
	This work concerns the construction and characterization of product kernels for multivariate approximation from a finite set of discrete samples.
	To this end, we consider composing different component kernels, each ac\-ting on a low-dimensional Euclidean space.
	Due to Aronszajn (1950), the product of positive {\em semi-}definite kernel functions is again positive {\em semi-}definite, 
	where, moreover, the corresponding native space is a particular instance of a tensor product, referred to as Hilbert tensor product.

	We first analyze the general problem of multivariate interpolation by product kernels. 
	Then, we further investigate the tensor product structure, in particular for {\em grid-like} samples.
         We use this case to show that the product of positive definite kernel functions is again positive definite. 
	Moreover, we develop an efficient computation scheme for the well-known Newton basis. 
	Supporting numerical examples show the good performance of product kernels, especially for their flexibility.
\end{abstract}

\section{Introduction}\label{intro}
Positive definite functions are powerful tools for multivariate interpolation. Commonly used kernels, such as Gaussians, (inverse) multiquadrics and polyharmonic splines, are well-known to provide high-performance reconstructions in scattered data approximation~\cite{wendland}. Due to Schaback's uncertainty relation~\cite{schaback1995error}, however, their resulting reconstruction methods are often critical when it comes to combine high order approximation with numerical stability. In many cases, this leads to severe limitations, e.g.~in particle-based fluid flow simulations, where {\em anisotropic} particle samples are very critical from a numerical viewpoint.
In fact, applications in computational fluid dynamics require more sophisticated constructions of more efficient and highly flexible kernel methods. 

In our previous work~\cite{anisoPart_AlbEntIsk}, we considered working with kernels on anisotro\-pic transformations to better adapt the kernel reconstruction scheme to the spatial distribution of sample points. In that case, anisotropic kernels are constructed by replacing the Euclidean norm in the argument of a fixed radial kernel 
by an {\em anisotropic} norm $\|\cdot\|_A$, $A$-norm, defined as
\begin{equation*}
\Vert x \Vert_A^2 := x^T A x
\qquad \mbox{ for }  x \in \mathbb{R}^d,
\end{equation*}
where $A\in\mathbb{R}^{d \times d}$ is a suitably chosen symmetric positive definite matrix. 
To make one example, for the standard Gaussian kernel and a diagonal matrix $A = \text{diag} (\varepsilon_1, ... , \varepsilon_d)$ with positive diagonal entries 
$\varepsilon_i>0$, this yields the kernel
\begin{equation*}
e^{-\|x\|^2_A} = e^{-x^T A x} = \prod_{i=1}^{d}e^{- \varepsilon_i x_i^2} \qquad \text{for } x = \left( x_1, ... , x_d \right)^T \in \mathbb{R}^d,
\end{equation*}
see \cite{Fasshauer2011}. Hence, this anisotropic version of the standard Gaussian kernel is given by a product of $d$ univariate kernels, each of them equipped with an own shape parameter $\varepsilon_i > 0$, $i=1,\dots,d$. 

This gives rise to study kernels that are pro\-ducts of positive definite functions defined on different lower-dimensional spaces, 
which are not necessarily translational-invariant.
The main intention of this approach is to further improve the flexibility of kernel-based reconstruction methods, as the initial domain can be split into several axes and each axis can be equipped with an individual kernel function depending on the application. 
In this paper, we will further elaborate on the concept of product kernels in order to construct more flexible kernel reconstruction schemes,
whose performance we then compare with reconstructions by standard kernels. 

The outline of this paper is as follows.
In Section \ref{prodKernel}, we introduce pro\-duct kernels as special types of positive semi-definite functions.
We discuss some of their properties, where we mainly rely on the work of Aronszajn~\cite{aronszajn1950theory}, 
who showed that native spaces of product kernels are tensor products of the corresponding components' native spaces, 
with a well-behaved inner product.
This type of tensor product is referred to as the \textit{Hilbert tensor product}~\cite{KadisonRingrose1983,Neveu1968}. 
In contrast to the standard tensor product, the Hilbert tensor product of Hilbert spaces yields a Hilbert space. Furthermore, the inner product of tensors is given by the multiplication of the inner products in the component spaces. 
We explain the basic theory on product kernels and their native spaces in Section~\ref{sec:Tensor_Hilbert}.

In Section \ref{interpolation}, we discuss interpolation by product kernels. 
To this end, the 
positive definiteness of a kernel is desired for the well-posedness of the kernel's interpolation scheme.
We remark that product kernels from finitely many 
positive definite {\em translation-invariant} kernels are 
positive definite (cf.~\cite[Proposition 6.25]{wendland}). To the best of our knowledge, however, the more general case of product kernels from {\em arbitrary} 
positive definite kernels has not been covered, yet.
To tackle this open problem, we consider \textit{grid-like} sets of samples (i.e., grid-like sets of interpolation points).
We prove that the product interpolation matrix on grid-like point sets is the Kronecker pro\-duct of its component interpolation matrices. 
This important observation enables us to show that product kernels maintain the 
positive definiteness of their component kernels, 
see Theorem~\ref{thm:posdef}.

Due to the structure of the inner product in Hilbert tensor product spaces, it is possible to efficiently compute orthonormal bases in the case of grid-like point sets. We construct the \textit{Newton basis} (cf.~\cite{Muller2009article,Pazouki2011}) by combining the pre-computed Newton bases of the component spaces via the Kronecker product, which reduces the computational costs significantly, as we show in Section~\ref{Newton_basis}.

In Section~\ref{sec:greedy}, we employ that Newton basis to construct an efficient update formula for the proposed interpolation scheme.
This is done by point selection strategies that essentially maintain the grid-like structure of the sample points.
Our discussion in Section~\ref{sec:greedy} results in a component-wise version of the \textit{P-greedy algorithm}~\cite{DeMarchi2005}, 
which converges for continuous kernels on compact domains.

In Section \ref{numeric}, we finally provide supporting numerical examples to show the flexibility of product kernels.
In particular, the good performance of product kernels on grid-like data demonstrates their utility.

\section{Product kernels} \label{prodKernel}
Kernel-based approximation aims at finding an interpolant $s_f$ to a target function $f$ satisfying \textit{interpolation conditions}
\begin{equation}\label{eq:interpocond}
s_f(x_i) = f(x_i)
\qquad \mbox{ for } i=1,\ldots,n,
\end{equation}
on given pairwise distinct data points $X= \{ x_1,\ldots, x_n \} \subset \mathbb{R}^d$
and corres\-ponding function values 
$f_X = \left( f(x_1), ... , f(x_n) \right)^T \in \mathbb{R}^n$.
For a fixed kernel function $K: \mathbb{R}^d \times \mathbb{R}^d \longrightarrow \mathbb{R}$, we restrict the interpolant to the form 
\begin{equation} \label{eq:interpolant_form}
s_f = \sum_{i=1}^{n} c_i\; K(\cdot, x_i),
\end{equation}
containing the coefficients $c = (c_1,\ldots,c_n)^T \in \mathbb{R}^n$. Inserting (\ref{eq:interpolant_form}) into (\ref{eq:interpocond}) leads to the linear system
\begin{align} \label{eq:intpol_system}
	A_{K,X} \cdot c = f_X, 
\end{align}
with the \textit{interpolation matrix}
\begin{align*}
 A_{K , X} := \left( K(x_i , x_j) \right)_{1\leq i,j \leq n}\in\mathbb{R}^{n\times n}.
\end{align*}
The kernel $K$ is called \textit{positive \mbox{(semi-)definite}}, iff for any finite set of pairwise distinct data points $X= \{ x_1,\ldots, x_n \} \subset \mathbb{R}^d$, $n \in \mathbb{N}$,
the interpolation matrix $A_{K,X}$ is symmetric and positive (semi-)definite. For the well-posedness of the interpolation scheme, $K$ is required to be positive defi\-nite, so that the system \eqref{eq:intpol_system} has a unique solution. For more details on positive (semi)-definite functions and their properties concerning scattered data interpolation we refer to~\cite{Iske_Approx,wendland}.

In the following of this section, we introduce product kernels and summarize a few relevant results. To this end, we consider the (finite) Cartesian product
$$
     \mathbb{R}^d \simeq \mathbb{R}^{d_1} \times \dots \times \mathbb{R}^{d_M}
     \qquad \mbox{ for } d_i \in \mathbb{N},  \; i=1,\dots,M,
$$ 
between lower dimensional Euclidean spaces $\mathbb{R}^{d_i}$, so that $d = \sum_{i=1}^{M} d_i$. 
For ${x\in\mathbb{R}^d}$, its projection on the $i$-th product space $\mathbb{R}^{d_i}$ is denoted as $x^i$. 
Likewise, for the projection of sets $X = \{ x_1,\ldots, x_n \} \subset \mathbb{R}^d$ onto 
$\mathbb{R}^{d_i}$, we use the notation 
$$
    X^i := \left\{ x_1^i , \ldots , x_n^i \right\} \subset \mathbb{R}^{d_i}
    \qquad \mbox{ for } i = 1,\ldots, M.
$$
It is important to remark at this point, that the entries of $X^i$ may contain duplicate elements.
We will further discuss this possibility later in Section~\ref{interpolation}.

\begin{defi} \label{def:prodKernel}
	Let $K_i:\mathbb{R}^{d_i} \times \mathbb{R}^{d_i} \longrightarrow \mathbb{R}$ be positive (semi-)definite kernels on $\mathbb{R}^{d_i}$, ${i=1,\dots,M}$ and $d := \sum_{i=1}^{M} d_i$. Then the {\tt product kernel}
	\begin{align*}
		K : \mathbb{R}^{d} \times \mathbb{R}^{d} \longrightarrow \mathbb{R}
	\end{align*}
	is defined as
	\begin{align*}
	K(x,y):= \myproduo{i=1}{M} K_i(x^i,y^i) \qquad \text{for } x,y \in \mathbb{R}^d.
\end{align*}
The functions $K_1, \ldots, K_M$ are called the {\tt component kernels} of $K$, and we use the notation $${K = \prod_{i=1}^{M} K_i}.$$
\qed
\end{defi}

\begin{figure}[h!]
	\begin{center}
		\fbox{
			\includegraphics[width=11cm]{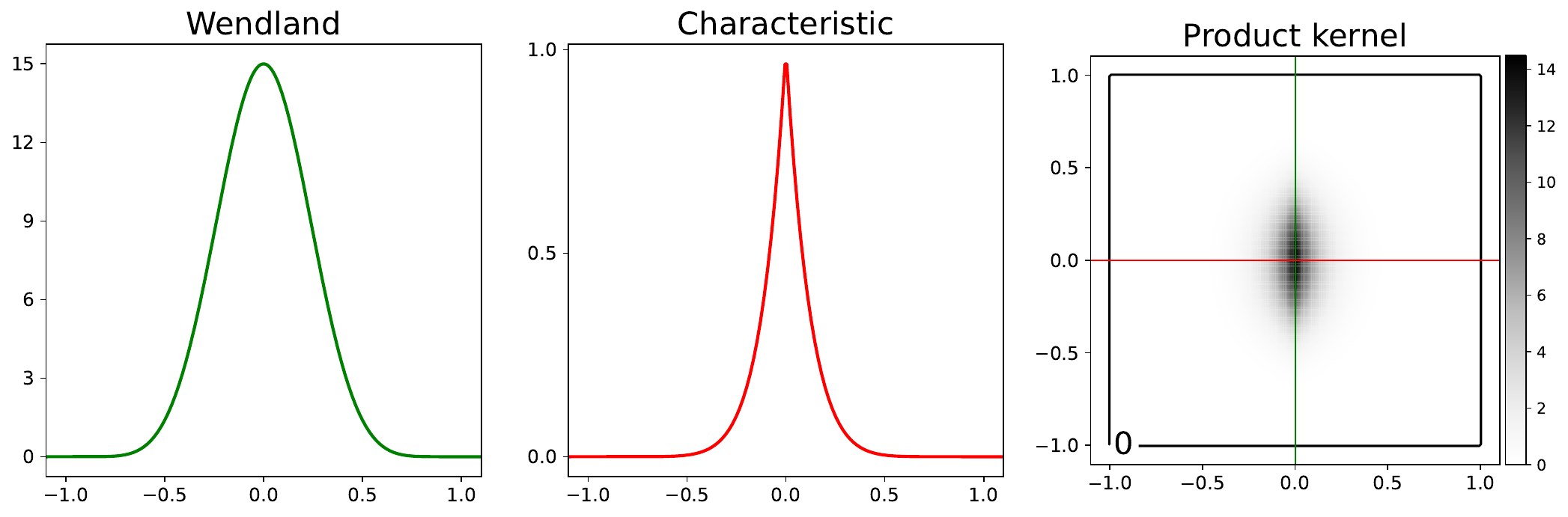}
		}
	\end{center}
	\caption{Bivariate product kernel: Visualization of the univariate component kernels (left, middle) and top view on the product kernel (right).}	
	\label{fig:productKernel}
\end{figure}

Figure~\ref{fig:productKernel} visualizes the construction of a two-dimensional product kernel. 
On given data points $X = \{ x_1,\ldots, x_n \} \subset \mathbb{R}^d$
and a product kernel $K$ with components $K_i$, 
the entries of the corresponding interpolation matrix can be written as
\begin{align*}
	\left(A_{K,X}\right)_{j,k} = K(x_j,x_k)
	= \myproduo{i=1}{M}\; K_i(x_j^i,x_k^i)
	= \myproduo{i=1}{M}\; \left(A_{K_i,X^i}\right)_{j,k},
\end{align*}
so that $A_{K,X}$ is the \textit{Hadamard product}~\cite[Chapter 5]{Horn_MatrixAna} of the component interpolation matrices $A_{K_i,X^i}$. 
The next result from~\cite{aronszajn1950theory} relies on the \textit{Schur product theorem}~\cite[Theorem 5.2.1]{Horn_MatrixAna}.

\begin{theorem} \label{thm:positive_semidefinite}
	Let $K_i$ be positive semi-definite kernels on $\mathbb{R}^{d_i}$, $i=1,\dots,M$. 
	Then, the product kernel $K = \prod_{i=1}^{M} K_i$ is positive semi-definite on $\mathbb{R}^d$, where 
	$d = \sum_{i=1}^{M} d_i$.
	\qed
\end{theorem}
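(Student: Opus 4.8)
The plan is to reduce the statement to the Schur product theorem, exactly as the text hints. The key observation, already recorded just before the theorem, is that for any finite set $X = \{x_1,\dots,x_n\} \subset \mathbb{R}^d$ the interpolation matrix factors entrywise as a Hadamard product,
\[
	A_{K,X} = A_{K_1,X^1} \odot A_{K_2,X^2} \odot \dots \odot A_{K_M,X^M},
\]
where $X^i \subset \mathbb{R}^{d_i}$ is the projection of $X$ onto the $i$-th factor. So the proof is essentially a matter of checking that Hadamard products preserve symmetry and positive semi-definiteness, then applying this $M-1$ times.

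First I would fix an arbitrary finite point set $X = \{x_1,\dots,x_n\} \subset \mathbb{R}^d$ (pairwise distinct) and recall the Hadamard factorization above. Each factor $A_{K_i,X^i}$ is symmetric because $K_i$ is a symmetric kernel, and it is positive semi-definite by definition of positive semi-definiteness of $K_i$ — here one should note the mild subtlety that the projected points $x_1^i,\dots,x_n^i$ need not be pairwise distinct, but a matrix $\big(K_i(x_j^i,x_k^i)\big)_{j,k}$ with repeated points is still positive semi-definite (it is a principal-type submatrix pattern of a genuine interpolation matrix, or one argues directly that repetition only adds linearly dependent rows/columns, which cannot destroy semi-definiteness). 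Symmetry of the Hadamard product is immediate from symmetry of the factors. For positive semi-definiteness I would invoke the Schur product theorem: the Hadamard product of two positive semi-definite matrices is positive semi-definite. Applying this inductively to $A_{K_1,X^1},\dots,A_{K_M,X^M}$ gives that $A_{K,X}$ is positive semi-definite. Since $X$ was arbitrary, $K$ is positive semi-definite on $\mathbb{R}^d$ by definition.

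I do not expect any serious obstacle here; the statement is classical (Aronszajn) and the argument is short. The only point that needs a sentence of care is the one just mentioned: the projection $X \mapsto X^i$ can collapse distinct points to coincident ones, so one must be slightly careful that "positive semi-definite kernel" still yields a positive semi-definite Gram-type matrix when the arguments are allowed to repeat. This is harmless — a Gram matrix of a (possibly repeated) list of points is always positive semi-definite, since for any coefficient vector $c$ one has $c^T A_{K_i,X^i} c = \sum_{j,k} c_j c_k K_i(x_j^i,x_k^i) \ge 0$ by grouping equal points and using positive semi-definiteness of $K_i$ on the distinct representatives. With that remark in place, the induction via the Schur product theorem closes the proof.

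I would therefore structure the write-up as: (i) state the Hadamard factorization of $A_{K,X}$; (ii) note symmetry and positive semi-definiteness of each factor, with the one-line remark about repeated projected points; (iii) conclude by induction from the Schur product theorem. This mirrors the citation trail the authors have set up, namely Aronszajn's original result together with Theorem 5.2.1 of Horn and Johnson.
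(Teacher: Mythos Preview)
Your proposal is correct and follows precisely the route the paper indicates: the theorem is stated without proof (it is attributed to Aronszajn), with the explicit remark that it relies on the Schur product theorem applied to the Hadamard factorization of $A_{K,X}$ recorded just before the statement. Your write-up fills in exactly these details, including the appropriate care about possibly coinciding projected points, so there is nothing to add.
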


We remark that for a given finite set $X \subset \mathbb{R}^d$ of {\em pairwise distinct} points, it cannot be guaranteed that its projections 
$X^i \subset \mathbb{R}^{d_i}$ onto $\mathbb{R}^{d_i}$ contain {\em pairwise distinct} points, for $i=1,\dots,M$.
A simple counter\-example is given by a grid in $\mathbb{R}^d$. Hence, in this case, the component interpolation matrices are not positive definite. 
This is critical insofar as positive definiteness is desired for unique interpolation. We come back to this important point in Section~\ref{interpolation}.

From now, we assume that the components $K_i$, for $i=1,\ldots,M$, are positive semi-definite kernels, so that their product kernel is po\-si\-tive semi-definite according to Theorem \ref{thm:positive_semidefinite}. It is well-known that any positive semi-definite kernel $K$ generates a reproducing kernel Hilbert space (RKHS) $\mathcal{H}_{K,\Omega}$ of functions
on a fixed domain $\Omega \subset \mathbb{R}^d$ (cf.~\cite[Theorem 3]{berlinet2011reproducing}), called \textit{native space}, 
whose inner product we denote as $\langle \cdot, \cdot \rangle_{K}$, whereby the native space norm is defined as $\|\cdot \|_K = \langle \cdot, \cdot \rangle^{1/2}_{K}$. 
As the product kernel is composed of the different component kernels, we aim to derive a similar relation between the associated native spaces. 
To this end, we can rely on a result by Aronszajn~\cite{aronszajn1950theory} for two component kernels, which can directly be generalized to an arbitrary number of finitely many components,
 cf.~Theorem~\ref{thm:products}. This key result allows us to identify the native space of a product kernel as a special type of tensor product.
 Details on this are discussed in Section~\ref{sec:Tensor_Hilbert}.

In the following of this paper, we use the notation
\begin{align*}
 \mathcal{S}_{K,\Omega} := {\rm span} \Big\lbrace K(\cdot , x) \; \Big\vert \; x \in \Omega \Big\rbrace
 \qquad \mbox{ for } \Omega \subset \mathbb{R}^d.
\end{align*}
Recall that $\mathcal{S}_{K,\Omega}$ lies dense in the native space $\mathcal{H}_{K,\Omega}$ of $K$ on $\Omega$.
We can formulate this important result as follows (for further details on the general construction of the native space we refer to~\cite[Chapter 10.2]{wendland}).

\begin{theorem} \label{thm:products}
	Let $\Omega_i \subset \mathbb{R}^{d_i}$ and $K_i$ be a positive semi-definite kernel on $\mathbb{R}^{d_i}$ for $i=1,...,M$. Let $\Omega = \bigtimes_{i=1}^{M} \Omega_i \subset \mathbb{R}^d$ and $K = \prod_{i=1}^{M} K_i$. Then, the mapping
	\begin{align} \label{eq:multilinear_map}
		\varphi : \bigtimes\limits_{i=1}^{M} \mathcal{H}_{K_i,\Omega_i} \longrightarrow \mathcal{H}_{K,\Omega} 
		\qquad \left( f_1, \ldots , f_M \right) \longmapsto \prod\limits_{i=1}^M f_i
	\end{align}
	is a well-defined multilinear map that satisfies the identity
	\begin{align} \label{eq:innerprod_tensor}
		\langle \varphi(f_1, \ldots , f_M) , \varphi(g_1, ... , g_M) \rangle_K = \prod\limits_{i=1}^{M} \langle f_i, g_i \rangle_{K_i}
	\end{align}
	for all $(f_1 , \ldots , f_M),(g_1, \ldots , g_M) \in \bigtimes_{i=1}^{M} \mathcal{H}_{K_i,\Omega_i}$. Moreover, we have
	\begin{align} \label{eq:density}
		\mathcal{S}_{K,\Omega} \subset {\rm span} \lbrace \varphi(f_1, \ldots , f_M) \mid f_i \in \mathcal{H}_{K_i,\Omega_i} \mbox{ for } i=1, \ldots ,M \rbrace.
	\end{align}
	\qed
\end{theorem}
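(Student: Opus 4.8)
The plan is to reduce to the case of two components by induction on $M$, and then prove the case $M=2$ directly. For the inductive step I would peel off the first factor: write $K = K_1\cdot K'$ with $K' := \prod_{i=2}^{M} K_i$ on $\mathbb{R}^{d'}$, $d' := \sum_{i=2}^{M} d_i$, and $\Omega' := \Omega_2 \times \dots \times \Omega_M$, so that $\Omega = \Omega_1 \times \Omega'$ and, by Theorem~\ref{thm:positive_semidefinite}, $K'$ is positive semi-definite. The induction hypothesis supplies a multilinear map $\varphi'$ for $K'$ satisfying the analogues of \eqref{eq:innerprod_tensor} and \eqref{eq:density}, and the two-component case (established below) supplies a map $\varphi''$ for the pair $K_1,K'$; one then sets $\varphi(f_1,\dots,f_M) := \varphi''\!\big(f_1,\varphi'(f_2,\dots,f_M)\big)$ and obtains multilinearity and \eqref{eq:innerprod_tensor} by chaining the two inner-product formulas. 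So from now on assume $M = 2$, abbreviate $\mathcal{H}_i := \mathcal{H}_{K_i,\Omega_i}$, and record the pointwise identity $K(\cdot,x) = K_1(\cdot,x^1)\cdot K_2(\cdot,x^2)$ for $x = (x^1,x^2) \in \Omega = \Omega_1\times\Omega_2$.

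The first step for $M = 2$ is to verify everything on kernel translates. Using the reproducing property in $\mathcal{H}_{K,\Omega}$, $\mathcal{H}_1$ and $\mathcal{H}_2$, one gets
\begin{align*}
  \big\langle K(\cdot,x),K(\cdot,y)\big\rangle_{K}
  &= K(x,y) = K_1(x^1,y^1)\,K_2(x^2,y^2)\\
  &= \big\langle K_1(\cdot,x^1),K_1(\cdot,y^1)\big\rangle_{K_1}\,\big\langle K_2(\cdot,x^2),K_2(\cdot,y^2)\big\rangle_{K_2},
\end{align*}
so \eqref{eq:innerprod_tensor} holds whenever each $f_i,g_i$ is a translate $K_i(\cdot,z^i)$. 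Since the pointwise product and all three inner products are bilinear, \eqref{eq:innerprod_tensor} — and in particular the norm identity $\|\varphi(f_1,f_2)\|_K = \|f_1\|_{K_1}\|f_2\|_{K_2}$ — extends to all $f_i,g_i\in\mathcal{S}_{K_i,\Omega_i}$; here $\varphi(f_1,f_2)=f_1\cdot f_2$ genuinely lies in $\mathcal{S}_{K,\Omega}\subset\mathcal{H}_{K,\Omega}$, since expanding the two finite sums exhibits it as a finite linear combination of translates $K(\cdot,(z^1,z^2))$.

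The heart of the argument is extending $\varphi$ to all of $\mathcal{H}_1\times\mathcal{H}_2$, i.e.\ showing that the pointwise product of two native-space functions again belongs to $\mathcal{H}_{K,\Omega}$. I would do this in two stages. First fix $f_2\in\mathcal{S}_{K_2,\Omega_2}$ and take $f_1\in\mathcal{H}_1$ with approximants $f_1^{(n)}\in\mathcal{S}_{K_1,\Omega_1}$, $f_1^{(n)}\to f_1$ in $\mathcal{H}_1$. By the norm identity, $(f_1^{(n)}\cdot f_2)_n$ is Cauchy in $\mathcal{H}_{K,\Omega}$, hence converges to some $F\in\mathcal{H}_{K,\Omega}$; since convergence in any native space forces pointwise convergence (point evaluations being bounded, $|g(z)|\le\|g\|\sqrt{K(z,z)}$) and $f_1^{(n)}(x^1)\to f_1(x^1)$ for each $x^1$, it follows that $F(x^1,x^2)=f_1(x^1)f_2(x^2)$, so $f_1\cdot f_2 = F\in\mathcal{H}_{K,\Omega}$ and the norm identity survives. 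Running the same argument again with the two factors swapped — now fixing $f_1\in\mathcal{H}_1$ and approximating an arbitrary $f_2\in\mathcal{H}_2$ by elements of $\mathcal{S}_{K_2,\Omega_2}$ — yields that $\varphi$ is well-defined and norm-multiplicative on all of $\mathcal{H}_1\times\mathcal{H}_2$; bilinearity is immediate from the pointwise definition.

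It remains to upgrade the norm identity to the full inner-product identity \eqref{eq:innerprod_tensor}, which follows by applying the polarization identity in $\mathcal{H}_{K,\Omega}$ successively in each of the two arguments of $\varphi$ (using bilinearity of $\varphi$). Finally, \eqref{eq:density} needs no further work: for $x\in\Omega$ one has $x^i\in\Omega_i$, whence $K(\cdot,x)=\prod_{i=1}^{M}K_i(\cdot,x^i)=\varphi\big(K_1(\cdot,x^1),\dots,K_M(\cdot,x^M)\big)$ lies in the range of $\varphi$, so $\mathcal{S}_{K,\Omega}$ is contained in the span of that range. I expect the two-stage limiting argument of the third step to be the main obstacle: one must identify the abstract $\mathcal{H}_{K,\Omega}$-limit with the concrete pointwise product, and must extend the factors one at a time (passing through $\mathcal{S}_{K_2,\Omega_2}$ first) so that the norm identity is available at every step.
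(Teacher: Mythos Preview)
The paper does not actually prove this theorem: it is stated without proof and attributed to Aronszajn~\cite{aronszajn1950theory} (for $M=2$, with the remark that the extension to general $M$ is direct). Your overall plan---reduce to $M=2$ by peeling off one factor, verify the identity on kernel translates, extend by bilinearity to $\mathcal{S}_{K_1,\Omega_1}\times\mathcal{S}_{K_2,\Omega_2}$, and then pass to the full native spaces by a two-stage density argument using that native-space convergence forces pointwise convergence---is correct and is essentially Aronszajn's argument.

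There is, however, a genuine gap in your final step. You claim that the full inner-product identity~\eqref{eq:innerprod_tensor} follows from the norm identity $\|\varphi(f_1,f_2)\|_K=\|f_1\|_{K_1}\|f_2\|_{K_2}$ by polarizing ``successively in each of the two arguments''. Polarizing in the first slot (with $f_2=g_2$) gives $\langle\varphi(f_1,f_2),\varphi(g_1,f_2)\rangle_K=\langle f_1,g_1\rangle_{K_1}\|f_2\|_{K_2}^2$, and symmetrically in the second slot; combining these yields only
\[
\langle\varphi(f_1,f_2),\varphi(g_1,g_2)\rangle_K+\langle\varphi(f_1,g_2),\varphi(g_1,f_2)\rangle_K
=2\,\langle f_1,g_1\rangle_{K_1}\langle f_2,g_2\rangle_{K_2},
\]
which does not force the individual summands to be what you want. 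Bilinearity plus the norm identity alone are genuinely insufficient: take $\mathcal{H}_1=\mathcal{H}_2=\mathbb{R}^2$, $H=\mathbb{R}^4$, and $\varphi(e_i,e_j')=v_{ij}$ with $v_{11}=(1,0,0,0)$, $v_{12}=(0,1,0,0)$, $v_{21}=(0,\tfrac12,\tfrac{\sqrt3}{2},0)$, $v_{22}=(-\tfrac12,0,0,\tfrac{\sqrt3}{2})$; then $\|\varphi(a,b)\|=\|a\|\,\|b\|$ for all $a,b$, yet $\langle\varphi(e_1,e_1'),\varphi(e_2,e_2')\rangle=-\tfrac12\neq0$.

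The fix is simple and requires no new ideas: instead of tracking only the norm identity through your two density stages, track the full inner-product identity. You already have it on $\mathcal{S}_{K_1,\Omega_1}\times\mathcal{S}_{K_2,\Omega_2}$ from Step~2, and the norm identity gives continuity of $\varphi$ in each slot, so for approximants $f_i^{(n)},g_i^{(n)}\in\mathcal{S}_{K_i,\Omega_i}$ converging to $f_i,g_i\in\mathcal{H}_{K_i,\Omega_i}$ you may pass to the limit directly in
$\langle\varphi(f_1^{(n)},f_2^{(m)}),\varphi(g_1^{(n)},g_2^{(m)})\rangle_K
=\langle f_1^{(n)},g_1^{(n)}\rangle_{K_1}\langle f_2^{(m)},g_2^{(m)}\rangle_{K_2}$,
first in $m$ and then in $n$. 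Everything else in your proposal is fine.
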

 
\section{Native spaces of product kernels} \label{sec:Tensor_Hilbert}
In this section, we introduce the \textit{Hilbert tensor product}, where relevant of its properties are first provided for preparation regarding our further analysis. 
Next we fix definitions and notations (cf.~\cite{KadisonRingrose1983}).

\begin{defi} \label{def:HS_mapping}
Let $\mathcal{H}_1, \ldots , \mathcal{H}_M,Z$ be real Hilbert spaces and
\begin{align*}
  \varphi: \bigtimes_{i=1}^M \mathcal{H}_i \longrightarrow Z
\end{align*}
be a multilinear function on the standard Cartesian product of $\mathcal{H}_1, \ldots , \mathcal{H}_M$.
\begin{itemize}
  \item The multilinear function $\varphi$ is {\tt bounded}, iff there is a $c \in \mathbb{R}_+$ satisfying
  \begin{align*}
    \Vert \varphi (x_1, \ldots ,x_M) \Vert_Z \leq c \cdot \prod\limits_{i=1}^M \Vert x_i \Vert_{\mathcal{H}_i}
    \qquad \mbox{ for all }  (x_1, \ldots , x_M) \in \bigtimes_{i=1}^M \mathcal{H}_i.
  \end{align*}
  \item 
  A bounded $\varphi$ is called {\tt weak Hilbert-Schmidt mapping}, iff there is a constant $d \in \mathbb{R}_+$, 
  such that for any sequence of orthonormal bases 
  $B_1 \subset \mathcal{H}_1,\ldots,B_M \subset \mathcal{H}_M$ and $z \in Z$ we have
  \begin{align*}
    \sum\limits_{b_1 \in B_1} \hdots \sum\limits_{b_M \in B_M} \vert \langle \varphi(b_1, \ldots ,b_{M}) , z \rangle_Z \vert^2 \leq d^2 \Vert z \Vert_Z^2.
  \end{align*}  
\end{itemize}
\qed
\end{defi}

In contrast to the standard tensor product, the Hilbert tensor product (cf.~Definition~\ref{def:Hilbert:tensor:product}) of Hilbert spaces is a Hilbert space. 
This is a crucial detail, since kernel-based approximation theory usually works with (reproducing kernel) Hilbert spaces. 
The following theorem is a slight modification of \cite[Theorem~2.6.4]{KadisonRingrose1983}, 
which details the main properties of Hilbert tensor products.


\begin{theorem} \label{thm:hilbert_tensor}
  For real Hilbert spaces $\mathcal{H}_1, \ldots , \mathcal{H}_M$, the following properties hold.
  \begin{enumerate}
    \item[(a)] 
    There is a multilinear map $\varphi: \bigtimes_{i=1}^M \mathcal{H}_i \longrightarrow \mathcal{H}$
    to a real Hilbert space $\mathcal{H}$,
    such that for any $(x_1, \ldots , x_M ),(y_1 , \ldots , y_M) \in \bigtimes_{i=1}^M \mathcal{H}_i$, we have
    \begin{equation}
    \label{inner_product}
      \langle \varphi (x_1, \ldots , x_M) , \varphi(y_1, \ldots ,y_M ) \rangle_\mathcal{H} = \prod\limits_{i=1}^M \langle x_i , y_i \rangle_{\mathcal{H}_i}
    \end{equation}
    and such that
    \begin{align*}
      \mathcal{H}_0 := \myspan \lbrace \varphi (x_1 , \ldots , x_M ) \mid ( x_1 , \ldots , x_M ) \in \mathcal{H}_1 \times \cdots \times \mathcal{H}_M \rbrace
    \end{align*}
    is a dense subset of $\mathcal{H}$.
    \item[(b)] 
    Let $B_1 \subset \mathcal{H}_1, \ldots , B_M \subset \mathcal{H}_M$ be orthonormal bases in $\mathcal{H}$. 
    Then
    \begin{align*}
      B = \lbrace \varphi(b_1, \ldots ,b_M) \mid (b_1, \ldots ,b_M) \in B_1 \times \cdots \times B_M \rbrace
    \end{align*}
    is an orthonormal basis of $\mathcal{H}$.
    \item[(c)] 
    The map $\varphi$ is a weak Hilbert-Schmidt mapping, satisfying the following universal property: 
    If $Z$ is a Hilbert space and $\psi: \bigtimes_{i=1}^M \mathcal{H}_i \longrightarrow Z$ is a weak Hilbert-Schmidt mapping, 
    then there is a unique bounded linear map $T: \mathcal{H} \longrightarrow Z$ such that $\psi = T \circ \varphi$.
    \item[(d)] 
    Let $\tilde{\mathcal{H}}$ be a Hilbert space and $\tilde{\varphi}:\bigtimes_{i=1}^M \mathcal{H}_i \longrightarrow \tilde{\mathcal{H}}$ be a weak Hilbert-Schmidt mapping satisfying the universal property in (c). Then, there exists an isometric isomorphism $U: \mathcal{H} \longrightarrow \tilde{\mathcal{H}}$ with $U \circ \varphi = \tilde{\varphi}$. Hence, the pair $(\tilde{\mathcal{H}},\tilde{\varphi})$ satisfies the properties in~(a).
    \item[(e)] 
    If the tuple $(\tilde{\mathcal{H}},\tilde{\varphi})$ satisfies the properties in~(a), 
    then there is an isometric isomophism $U: \mathcal{H} \longrightarrow \tilde{\mathcal{H}}$ with $U \circ \varphi = \tilde{\varphi}$.
  \end{enumerate}
\qed
\end{theorem}

We remark that the relation \eqref{inner_product} between the inner product $\langle \cdot, \cdot \rangle_{\mathcal{H}}$ in $\mathcal{H}$ and those in $\mathcal{H}_i$, $i=1,\ldots,M$,
implies that $\varphi$ is bounded with $c = 1$.
Moreover, property (b) in Theorem~\ref{thm:hilbert_tensor} implies that $\varphi$ is a weak Hilbert-Schmidt mapping with $d = 1$.

We continue to provide further definitions and notations.

\begin{defi}
\label{def:Hilbert:tensor:product}
  Let $\mathcal{H}_1, \ldots ,\mathcal{H}_M$ be Hilbert spaces and $(\mathcal{H},\varphi)$ satisfy the properties from Theorem \ref{thm:hilbert_tensor}~(a). 
  Then $(\mathcal{H},\varphi)$ is called the {\tt Hilbert tensor product} of $\mathcal{H}_1, \ldots,\mathcal{H}_M$, denoted by
  \begin{align*}
    (\mathcal{H},\varphi) = \bigotimes\limits_{i=1}^M \mathcal{H}_i, \qquad \text{in short:} \quad \mathcal{H} = \bigotimes\limits_{i=1}^M \mathcal{H}_i.
  \end{align*}
  Given $(x_1, \ldots ,x_M) \in \bigtimes_{i=1}^M \mathcal{H}_i$, the corresponding {\tt tensor} is written as
  \begin{align*}
    \underset{i=1}{\overset{M}{\otimes}} x_i = x_1 \otimes \cdots \otimes x_M := \varphi(x_1, \ldots,x_M).
  \end{align*}
  \qed
\end{defi}

Due to Theorem \ref{thm:hilbert_tensor}~(e), the Hilbert tensor product is, up to isometric isomorphy, 
uniquely characterized by the properties of Theorem \ref{thm:hilbert_tensor}~(a). 

Next we identify the native space of a product kernel $K = \prod_{i=1}^M K_i$ as the Hilbert tensor product of its component native spaces $\mathcal{H}_{K_i, \Omega_i}$.

\begin{theorem} \label{thm:native_tensor}
Let $\Omega_i \subset \mathbb{R}^{d_i}$ and $K_i$ be positive semi-definite kernels on $\mathbb{R}^{d_i}$ for $i=1,...,M$. 
Moreover, let $\Omega = \bigtimes_{i=1}^{M} \Omega_i \subset \mathbb{R}^d$ and $K = \prod_{i=1}^{M} K_i$. Then
\begin{align*}
  \mathcal{H}_{K, \Omega} = \bigotimes \mathcal{H}_{K_i, \Omega_i}.
\end{align*}
\end{theorem}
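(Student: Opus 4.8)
The plan is to verify that the pair $(\mathcal{H}_{K,\Omega}, \varphi)$, with $\varphi$ the multilinear map from Theorem~\ref{thm:products}, satisfies the characterizing properties of the Hilbert tensor product listed in Theorem~\ref{thm:hilbert_tensor}~(a). By the uniqueness statement in Theorem~\ref{thm:hilbert_tensor}~(e), this immediately yields the claimed isometric isomorphism $\mathcal{H}_{K,\Omega} \simeq \bigotimes_{i=1}^M \mathcal{H}_{K_i,\Omega_i}$. Thus the whole proof reduces to checking two things: first, that $\varphi$ satisfies the inner-product identity $\langle \varphi(f_1,\dots,f_M), \varphi(g_1,\dots,g_M)\rangle_K = \prod_{i=1}^M \langle f_i, g_i\rangle_{K_i}$; second, that $\mathcal{H}_0 := \operatorname{span}\{\varphi(f_1,\dots,f_M) \mid f_i \in \mathcal{H}_{K_i,\Omega_i}\}$ is dense in $\mathcal{H}_{K,\Omega}$.

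The first point is exactly equation~\eqref{eq:innerprod_tensor}, already granted by Theorem~\ref{thm:products}, so nothing remains to do there. For the density, I would argue as follows. The inclusion~\eqref{eq:density} from Theorem~\ref{thm:products} states that $\mathcal{S}_{K,\Omega} \subset \mathcal{H}_0$. Indeed, for any $x = (x^1,\dots,x^M) \in \Omega$ one has the pointwise factorization $K(\cdot, x) = \prod_{i=1}^M K_i(\cdot^{\,i}, x^i) = \varphi\big(K_1(\cdot,x^1),\dots,K_M(\cdot,x^M)\big)$, and each $K_i(\cdot, x^i) \in \mathcal{H}_{K_i,\Omega_i}$; hence $\mathcal{S}_{K,\Omega} \subseteq \mathcal{H}_0 \subseteq \mathcal{H}_{K,\Omega}$. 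Now recall, as noted in the excerpt, that $\mathcal{S}_{K,\Omega}$ is dense in the native space $\mathcal{H}_{K,\Omega}$. Since $\mathcal{S}_{K,\Omega}$ is dense and sits inside $\mathcal{H}_0$, the intermediate space $\mathcal{H}_0$ is a fortiori dense in $\mathcal{H}_{K,\Omega}$. This establishes property~(a) for $(\mathcal{H}_{K,\Omega},\varphi)$, and Theorem~\ref{thm:hilbert_tensor}~(e) finishes the argument.

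I would also remark that a small well-definedness check is implicitly needed: the elements $\varphi(f_1,\dots,f_M) = \prod_i f_i$ must genuinely lie in $\mathcal{H}_{K,\Omega}$, and the map must be multilinear — but this is precisely the content of the first assertion of Theorem~\ref{thm:products}, so it may simply be cited. One subtlety worth a sentence is that the ``orthonormal bases'' phrasing and the weak Hilbert-Schmidt condition in Theorem~\ref{thm:hilbert_tensor} are part of properties (b)--(c), not (a); for the uniqueness route via (e) we only need (a), which is the cleaner path and avoids having to verify the Hilbert-Schmidt estimate for $\varphi$ directly.

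The main (and essentially only) obstacle is conceptual rather than computational: one must be comfortable that Theorem~\ref{thm:hilbert_tensor}~(e) genuinely characterizes the Hilbert tensor product up to isometric isomorphism from property~(a) alone, so that exhibiting \emph{any} pair satisfying~(a) suffices. Given the machinery already assembled — the inner-product identity~\eqref{eq:innerprod_tensor}, the inclusion~\eqref{eq:density}, and the standard density of $\mathcal{S}_{K,\Omega}$ in $\mathcal{H}_{K,\Omega}$ — the proof is short and amounts to assembling these three ingredients in the right order.
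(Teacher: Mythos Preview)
Your proposal is correct and follows essentially the same route as the paper: invoke the map $\varphi$ from Theorem~\ref{thm:products}, use \eqref{eq:innerprod_tensor} and \eqref{eq:density} (together with the density of $\mathcal{S}_{K,\Omega}$) to verify the properties in Theorem~\ref{thm:hilbert_tensor}~(a), and then conclude via Theorem~\ref{thm:hilbert_tensor}~(e). The paper's version is simply the two-line skeleton of your more expanded argument.
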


\begin{proof}
  Consider the multilinear mapping (\ref{eq:multilinear_map}) from Theorem \ref{thm:products}. According to (\ref{eq:innerprod_tensor}) and (\ref{eq:density}), 
  and since ${\mathcal S}_{K,\Omega}$ is dense in ${\mathcal H}_{K,\Omega}$,  
  the pair $(\mathcal{H}_{K,\Omega},\varphi)$ satisfies the properties in Theorem \ref{thm:hilbert_tensor}~(a). This completes the proof already by part~(e) of Theorem \ref{thm:hilbert_tensor}.
\end{proof}

\begin{remark}
We remark that the identification of the product kernel’s native space as a Hilbert tensor product can be found in~\cite[Chapter VI]{Neveu1968}, 
where the image of $\varphi$ from Theorem \ref{thm:products} is completed to an Hilbert space. 
But then, the (abstract) elements of this completion need to be converted into functions. 
Note that we can skip this conversion step in our analysis, since we already know that the product kernel generates a Hilbert space of functions and $\varphi$ maps directly into this function space, as explained in the preliminaries of Section~\ref{prodKernel}.
\end{remark}

\section{Interpolation with product kernels} \label{interpolation}
Recall from Section \ref{prodKernel} that the Hadamard product is not suitable to show that positive definite component kernels yield a positive definite product kernel.
We remark that the statement of Theorem~\ref{thm:positive_semidefinite} relies on 
the \textit{Schur product theorem}.

Another tool for the construction of positive semi-definite kernels is given by \textit{Bochner's theorem} (cf.~\cite[Theorem 6.6]{wendland}).
Recall that the characterization of Bochner's theorem makes a link between {\em translation-invariant} kernels $K$ and non-negative Fourier transforms,
by assuming the form
\begin{equation}
\label{bocher:assume:form}
    K(x, y) = \Phi(x - y) \qquad \text{ for } x, y \in \mathbb{R}^{d}.
\end{equation}

For the reader's convenience, we now formulate an extension of Bochner's theorem from kernels to product kernels (cf.~\cite[Proposition 6.25]{wendland}).

\begin{theorem} \label{thm:prodfunc_multi}
	Let $K_i$ be positive definite kernels on $\mathbb{R}^{d_i}$ of the form
	\begin{align*}
		K_i(x^i, y^i) = \Phi_i( x^i - y^i) \qquad \text{ for } x^i, y^i \in \mathbb{R}^{d_i},
	\end{align*}
	where $\Phi_i \in L_1(\mathbb{R}^{d_i}) \cap \mathcal{C}(\mathbb{R}^{d_i})$, for $i=1,\dots,M$.
	Then, the product kernel $$K = \prod_{i=1}^M K_i$$ is a positive definite kernel on $\mathbb{R}^{d}$, where $d=\sum_{i=1}^{M} d_i$.
\qed
\end{theorem}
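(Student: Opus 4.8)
The plan is to deduce this from Bochner's theorem, along the lines of \cite[Proposition~6.25]{wendland}. Note first that the Hadamard-product observation of Section~\ref{prodKernel} is of no use here: for a general pairwise distinct set $X \subset \mathbb{R}^d$ the projections $X^i$ need not be pairwise distinct, so the component matrices $A_{K_i,X^i}$ can be singular and Schur's theorem yields only positive \emph{semi-}definiteness (Theorem~\ref{thm:positive_semidefinite}). Instead I would work with the translation-invariant structure directly. Set
\[
  \Phi(z) := \prod_{i=1}^M \Phi_i(z^i) \qquad \text{for } z = (z^1,\dots,z^M) \in \mathbb{R}^d ,
\]
so that $K(x,y) = \prod_i \Phi_i(x^i - y^i) = \Phi(x-y)$, i.e.\ $K$ has the form \eqref{bocher:assume:form}. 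One first checks $\Phi \in \mathcal{C}(\mathbb{R}^d) \cap L_1(\mathbb{R}^d)$: continuity is immediate from continuity of the $\Phi_i$, and by Tonelli's theorem $\|\Phi\|_{L_1(\mathbb{R}^d)} = \prod_{i=1}^M \|\Phi_i\|_{L_1(\mathbb{R}^{d_i})} < \infty$.

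The key point is that both the Fourier transform and the defining positivity property factor over the coordinate blocks. By Fubini, the $d$-dimensional Fourier transform of $\Phi$ is the product of the component transforms, $\widehat{\Phi}(\omega) = \prod_{i=1}^M \widehat{\Phi_i}(\omega^i)$ for $\omega = (\omega^1,\dots,\omega^M)$. Since each $K_i$ is (strictly) positive definite with $\Phi_i \in \mathcal{C}(\mathbb{R}^{d_i}) \cap L_1(\mathbb{R}^{d_i})$, the Fourier characterization of strict positive definiteness for integrable translation-invariant kernels (Bochner's theorem together with \cite[Theorem~6.11]{wendland}) shows that $\widehat{\Phi_i}$ is non-negative and not identically zero; being continuous and non-negative, $\widehat{\Phi_i}$ is then strictly positive on some non-empty open set $U_i \subseteq \mathbb{R}^{d_i}$. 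Consequently $\widehat{\Phi} = \prod_i \widehat{\Phi_i}$ is non-negative on $\mathbb{R}^d$ and strictly positive on the non-empty open box $U_1 \times \dots \times U_M$. Applying the same characterization in the reverse direction to $\Phi \in \mathcal{C}(\mathbb{R}^d) \cap L_1(\mathbb{R}^d)$, we conclude that $\Phi$ is strictly positive definite, that is, $K = \prod_{i=1}^M K_i$ is a positive definite kernel on $\mathbb{R}^d$.

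The routine parts are the $L_1$-bound and the factorization of the Fourier transform, both immediate from Fubini/Tonelli. The step that needs care is making sure that the condition on $\widehat{\Phi_i}$ which encodes \emph{strict} (rather than merely semi-) positive definiteness really does survive the product; here this is handled by passing to the property ``$\widehat{\Phi_i}$ strictly positive on a non-empty open set'', which is manifestly stable under forming the product box because the $\mathbb{R}^{d_i}$ enter as independent coordinate blocks. A secondary, avoidable technicality is the justification of Fourier inversion for $\Phi$ (equivalently $\widehat{\Phi} \in L_1$), which follows from $\Phi$ being continuous and positive semi-definite; one can also bypass inversion entirely by writing, for pairwise distinct $x_1,\dots,x_n$ and $c \in \mathbb{R}^n \setminus \{0\}$,
\[
  \sum_{j,k=1}^n c_j c_k\, \Phi(x_j - x_k) = (2\pi)^{-d/2} \int_{\mathbb{R}^d} \widehat{\Phi}(\omega)\, \Big|\, \sum_{j=1}^n c_j\, e^{\mathrm{i}\, x_j \cdot \omega}\, \Big|^2 \mathrm{d}\omega ,
\]
and noting that the exponential sum is real-analytic in $\omega$, hence vanishes only on a Lebesgue null set, so that the integral is strictly positive unless $\widehat{\Phi}$ vanishes almost everywhere.
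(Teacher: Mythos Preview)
Your proposal is correct and follows precisely the Bochner/Fourier approach of \cite[Proposition~6.25]{wendland}, which is exactly what the paper cites in lieu of its own proof; the paper does not provide an independent argument for this theorem. Your additional remarks on why the Hadamard route fails and on the care needed to preserve \emph{strict} positive definiteness through the product are accurate and well placed.
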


In this section, we generalize Theorem~\ref{thm:prodfunc_multi}.
This is done by Theorem~\ref{thm:posdef}, where we omit the restrictive 
assumption~(\ref{bocher:assume:form}) on the kernel's translation-invariance.
To this end, we introduce specific instances of structured interpolation point sets, called {\em grid-like} data, in Section~\ref{sec:GridLike}. 
Then, in Section~\ref{sec:stability}, the numerical stability of interpolation by product kernels is analyzed.

\subsection{Interpolation on grid-like data}\label{sec:GridLike}
In order to show positive definiteness of product kernels with positive definite components, two ingredients are essential:
\begin{itemize}
\item
point sets of {\em grid-like} structure;
\item
the Kronecker product.
\end{itemize}

\begin{defi}
	If a point set $X \subset \mathbb{R}^d$ can be written as a Cartesian product
	\begin{align} \label{eq:grid_like}
	X = X^1 \times ... \times X^M = \bigtimes\limits_{i=1}^{M} X^i
	\end{align}
	between finite point sets $X^i \subset \mathbb{R}^{d_i}$, 
	where $i=1,...,M$ and $d=\sum_{i=1}^{M}d_i$, 
	then we say that $X$ has {\tt grid-like} structure, or, in short: $X$ is {\tt grid-like}.
	\qed
\end{defi}

\begin{defi}[{\cite[Definition 4.2.1]{Horn_MatrixAna}}] \label{def:kroneckerProd}
	The {\tt Kronecker product} $A \otimes B$ of two matrices $A = (a_{i,j})_{{1 \leq i \leq m \atop 1 \leq j \leq n}} \in \mathbb{R}^{m\times n}$ and $B\in \mathbb{R}^{p\times q}$ is defined as the block matrix
	\begin{align*}
	A \otimes B = \left[
	\begin{matrix}
	a_{1,1} B & \cdots & a_{1,n} B \\
	\vdots & \ddots & \vdots\\
	a_{m,1}B & \cdots & a_{m,n}B
	\end{matrix}
	\right] \in \mathbb{R}^{mp\times nq}.
	\end{align*}
	The entries of $A \otimes B$ are given by
	\begin{align*}
	\left(A\otimes B\right)_{j,k} = a_{\lceil j/p\rceil,\lceil k/q\rceil} \cdot b_{(j-1)\text{ mod }p +1,\;(k-1)\text{ mod }q +1}
	\end{align*}
	for $1 \leq j \leq mp$ and $1 \leq k \leq nq$, where $\lceil \cdot \rceil$ denotes as usual the ceiling function and $\text{`mod'}$ is the remainder after division.
	\qed
\end{defi}

The next result combines the above concepts,
whereby we show that the interpolation matrix for grid-like data can be obtained as a Kronecker product.

\begin{theorem} \label{thm:interpoMatrixAsKronecker}
	Let $K_i$ be positive semi-definite on $\mathbb{R}^{d_i}$ and $X^i \subset \mathbb{R}^{d_i}$ be finite point sets, for $i=1,\ldots,M$. 
	Moreover, let $K = \prod_{i=1}^M K_i$ and $X = \bigtimes_{i=1}^M X^i$. Then, there is an ordering of $X$, such that the corresponding interpolation matrix $A_{K,X}$ can be written as
		\begin{align*}
			A_{K,X} = \myotimes{i=1}{M} A_{K_i,X^i}.
		\end{align*}
\end{theorem}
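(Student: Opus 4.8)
The plan is to reduce the statement to a purely combinatorial identity about how indices are laid out in a Cartesian product versus in an iterated Kronecker product, and then verify the matrix entries match. First I would treat the case $M=2$ in detail and then get the general case by induction on $M$, using associativity of the Kronecker product (so that $\bigotimes_{i=1}^M A_{K_i,X^i}$ is unambiguous). So assume $X = X^1 \times X^2$ with $X^1 = \{x^1_1,\dots,x^1_{n_1}\} \subset \mathbb{R}^{d_1}$ and $X^2 = \{x^2_1,\dots,x^2_{n_2}\} \subset \mathbb{R}^{d_2}$, and set $n = n_1 n_2$.

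The key step is to fix the ordering of $X$. I would enumerate the points of $X$ in lexicographic order: to the index $j \in \{1,\dots,n\}$ associate the pair $(\lceil j/n_2\rceil, (j-1)\bmod n_2 + 1) =: (\alpha(j),\beta(j))$, and let the $j$-th point of $X$ be $\big(x^1_{\alpha(j)}, x^2_{\beta(j)}\big) \in \mathbb{R}^{d_1}\times\mathbb{R}^{d_2} \simeq \mathbb{R}^d$. This is exactly the pairing that appears in the entrywise formula for $A\otimes B$ recorded in Definition~\ref{def:kroneckerProd}. With this ordering, for indices $j,k$ I compute
\begin{align*}
\big(A_{K,X}\big)_{j,k}
&= K\big((x^1_{\alpha(j)},x^2_{\beta(j)}),(x^1_{\alpha(k)},x^2_{\beta(k)})\big)\\
&= K_1\big(x^1_{\alpha(j)},x^1_{\alpha(k)}\big)\cdot K_2\big(x^2_{\beta(j)},x^2_{\beta(k)}\big)\\
&= \big(A_{K_1,X^1}\big)_{\alpha(j),\alpha(k)}\cdot\big(A_{K_2,X^2}\big)_{\beta(j),\beta(k)},
\end{align*}
where the second line uses the definition of the product kernel. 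Comparing with the entry formula $(A\otimes B)_{j,k} = a_{\lceil j/p\rceil,\lceil k/q\rceil}\, b_{(j-1)\bmod p+1,(k-1)\bmod q+1}$ from Definition~\ref{def:kroneckerProd} with $A = A_{K_1,X^1}$, $B = A_{K_2,X^2}$, $p=q=n_2$, this is precisely $\big(A_{K_1,X^1}\otimes A_{K_2,X^2}\big)_{j,k}$. Hence $A_{K,X} = A_{K_1,X^1}\otimes A_{K_2,X^2}$.

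For general $M$, I would write $X = X^1 \times \big(X^2\times\cdots\times X^M\big)$, observe that $K = K_1 \cdot \big(\prod_{i=2}^M K_i\big)$ is a product kernel with two components, apply the $M=2$ case to get $A_{K,X} = A_{K_1,X^1}\otimes A_{\prod_{i\ge 2}K_i,\, X^2\times\cdots\times X^M}$, and then invoke the induction hypothesis on the second factor together with associativity of $\otimes$; one must check that the lexicographic ordering used at the outer level, composed with the lexicographic orderings from the inductive step, yields exactly the global lexicographic ordering of $X$, which is a straightforward unwinding of the index arithmetic. The main obstacle is purely bookkeeping: making the correspondence between the chosen enumeration of $X$ and the ceiling/mod index formula for the Kronecker product airtight, and being careful that the hypotheses only guarantee each $X^i$ is pairwise distinct (which is all that is needed here, since pairwise distinctness of the product $X$ follows automatically from distinctness of the factors). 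No analytic input is required; positive semi-definiteness of the $K_i$ is not even used in this particular statement beyond making the interpolation matrices well-defined symmetric objects.
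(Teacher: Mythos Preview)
Your proposal is correct and follows essentially the same approach as the paper: fix the lexicographic ordering via the ceiling/mod index map, verify the $M=2$ case entrywise against Definition~\ref{def:kroneckerProd}, and finish by induction using associativity of the Kronecker product. The only cosmetic difference is that the paper peels off the last factor in the induction step ($\tilde{X}=\bigtimes_{i=1}^{M}X^i$ and $X=\tilde{X}\times X^{M+1}$) whereas you peel off the first, which is immaterial.
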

\begin{proof} 
Let $n_i := \vert X^i \vert$ denote the number of points in $X^i$, for $i=1,\ldots,M$, and $n := \prod_{i=1}^{M} n_i$. We prove the statement by induction on $M$ as follows. 

\bigskip

	\noindent \underline{$M = 2$:} 
	For $X^1 := \left\{ x_1^1, \ldots , x_{n_1}^1 \right\}$, $X^2 := \left\{ x_1^{2} , \ldots , x_{n_2}^{2} \right\}$
	and $k \in \lbrace 1 , \ldots , n \rbrace$, we let
	\begin{align*}
		x_k = \left( x_{\lceil k/n_2 \rceil}^{1}, \;x_{(k-1) \text{ mod } n_2 +1}^{2} \right).
	\end{align*}
	This leads to an ordering $X = X^1 \times X^2 = \{ x_k \}_{k=1}^n$, which results in
	\begin{align*}
	\left(A_{K,X}\right)_{j,k}
	&=  K\left( \left( x^1_{ \lceil j/n_2 \rceil },\; x^2_{(j-1)\text{ mod }n_2 +1} \right),\;\left( x^1_{ \lceil k/n_2 \rceil }, \;x^2_{(k-1)\text{ mod }n_2 +1} \right) \right)\\
	&=  K_1 \left(x^1_{ \lceil j/n_2 \rceil },\; x^1_{ \lceil k/n_2 \rceil }  \right) \cdot \; K_2 \left( x^2_{(j-1)\text{ mod }n_2 +1},\; x^2_{(k-1) \text{ mod } n_2 +1} \right) \\
	&= \left( A_{K_1,X^1} \right)_{\lceil j/n_2 \rceil ,\; \lceil k/n_2 \rceil} \cdot \left( A_{K_2,X^2} \right)_{(j-1)\text{ mod }n_2 +1 ,\; (k-1)\text{ mod }n_2 +1}
	\end{align*}
	for $j,k \in \lbrace 1,\dots, n \rbrace$. Hence, the identity $A_{K,X} = A_{K_1,X^1} \otimes A_{K_2,X^2}$ holds.

\bigskip

	\noindent \underline{$M \longrightarrow M+1$:} 
	Let $\tilde{X} = \bigtimes_{i=1}^{M} X^i$, $\tilde{K} = \prod_{i=1}^M K_i$ and $\tilde{n} = \prod_{i=1}^M n_i$. 
	Due to the induction hypothesis, there is an ordering $\tilde{X} = \{ \tilde{x}_\ell \}_{\ell = 1}^{\tilde{n}}$ satisfying
	\begin{align*}
		A_{\tilde{K},\tilde{X}} = \myotimes{i=1}{M} A_{K_i,X^i}.
	\end{align*}
	As in the case $M = 2$, we can order $X = \tilde{X} \times X^{M+1}$ to get
	\begin{align*}
		A_{K,X} &= A_{\tilde{K}, \tilde{X}} \otimes A_{K_{M+1},X^{M+1}}.
	\end{align*}
	Thereby, we can conclude
	\begin{align*}
		A_{K,X} = \myotimes{i=1}{M+1} A_{K_i,X^i},
\end{align*}
which already completes our proof.
\end{proof}

Since the Kronecker product of positive definite matrices is positive defi\-nite (cf.~\cite[Theorem~4.2.12]{Horn_MatrixAna}), 
we can show that interpolation matrices $A_{K,X}$ for product kernels $K$ and grid-like point sets $X$ are positive definite, 
provided that {\em all} component kernels are positive definite. 
We summarize this result as follows.

\begin{corollary} \label{grid_pd}
	Under the assumptions and with using the notations of Theorem~\ref{thm:interpoMatrixAsKronecker},
	let the components $K_i$ be positive definite, and let the finite sets $X^i \subset \mathbb{R}^{d_i}$ contain pairwise distinct points, 	
	for $i=1,\ldots,M$. 
	Then, the interpolation matrix $A_{K,X}$ of the product kernel $K$ is positive definite.
	\qed
\end{corollary}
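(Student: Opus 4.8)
The plan is to combine the structural result of Theorem~\ref{thm:interpoMatrixAsKronecker} with the standard fact that the Kronecker product of symmetric positive definite matrices is again symmetric positive definite. Concretely, under the stated assumptions the point sets $X^i \subset \mathbb{R}^{d_i}$ are finite and \emph{pairwise distinct}, and each component kernel $K_i$ is positive definite; hence each component interpolation matrix $A_{K_i,X^i}$ is symmetric positive definite by the very definition of positive definiteness of a kernel. By Theorem~\ref{thm:interpoMatrixAsKronecker}, for a suitable ordering of $X = \bigtimes_{i=1}^M X^i$ we have $A_{K,X} = \bigotimes_{i=1}^M A_{K_i,X^i}$.

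Next I would invoke \cite[Theorem~4.2.12]{Horn_MatrixAna} (already cited just above the corollary), which states that if $A$ and $B$ are symmetric positive definite, then so is $A \otimes B$; by an immediate induction on $M$ this extends to the Kronecker product of finitely many symmetric positive definite matrices. Applying this to the factors $A_{K_i,X^i}$ yields that $A_{K,X}$ is symmetric positive definite. Since the ordering of the points only amounts to a simultaneous row/column permutation — i.e.\ conjugation by a permutation matrix, which preserves symmetry and positive definiteness — the conclusion holds for $A_{K,X}$ with any ordering of $X$, so the corollary follows.

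There is essentially no obstacle here: the corollary is a direct consequence of the preceding theorem and a textbook matrix fact, so the ``proof'' is just the assembly of these two ingredients. The only point that warrants a sentence of care is why each $A_{K_i,X^i}$ is genuinely positive \emph{definite} rather than merely positive semi-definite — this is exactly where the hypothesis that each $X^i$ is pairwise distinct enters, in contrast to the situation flagged in the remark after Theorem~\ref{thm:positive_semidefinite}, where projections of a general grid need not be pairwise distinct. One could optionally also note that $A_{K,X}$ is symmetric for free, since a Kronecker product of symmetric matrices is symmetric, so only the definiteness needs the argument above.
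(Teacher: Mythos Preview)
Your proposal is correct and follows exactly the approach the paper takes: the corollary is stated without a separate proof precisely because, as the paper notes just before it, it follows immediately from Theorem~\ref{thm:interpoMatrixAsKronecker} together with \cite[Theorem~4.2.12]{Horn_MatrixAna}. Your additional remarks on pairwise distinctness of the $X^i$ and on the ordering being a harmless permutation conjugation are accurate and make the argument slightly more explicit than the paper's one-line justification.
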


Now we are in a position, where we can show positive definiteness of the product kernel's matrix $A_{K,X}$, 
for any finite set $X$ of pairwise distinct points.

\begin{theorem} \label{thm:posdef}
	Let $K_i$ be positive definite kernels on $\mathbb{R}^{d_i}$, $i= 1,\dots, M$. 
	Then the corresponding product kernel $K$ is positive definite on $\mathbb{R}^d$, where $d = \sum_{i=1}^M d_i$.
\end{theorem}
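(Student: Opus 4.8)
The plan is to reduce the general case of an arbitrary finite set of pairwise distinct points to the grid-like case already handled by Corollary~\ref{grid_pd}. Given a finite set $X = \{x_1,\dots,x_n\} \subset \mathbb{R}^d$ of pairwise distinct points, the projections $X^i = \{x_1^i,\dots,x_n^i\} \subset \mathbb{R}^{d_i}$ need not be pairwise distinct, so we cannot directly invoke Corollary~\ref{grid_pd}. Instead, for each $i=1,\dots,M$, I would let $Y^i \subset \mathbb{R}^{d_i}$ be the set of \emph{distinct} values occurring among $x_1^i,\dots,x_n^i$, and form the grid $Y = \bigtimes_{i=1}^M Y^i$. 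Then $X \subset Y$: every point $x_k$ equals the tuple $(x_k^1,\dots,x_k^M)$, each coordinate of which lies in the corresponding $Y^i$. Since each $Y^i$ consists of pairwise distinct points and each $K_i$ is positive definite, Corollary~\ref{grid_pd} (via Theorem~\ref{thm:interpoMatrixAsKronecker}) tells us that $A_{K,Y}$ is positive definite.

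Next I would use the standard fact that a principal submatrix of a positive definite matrix is positive definite. The interpolation matrix $A_{K,X} = (K(x_j,x_k))_{1\le j,k\le n}$ is exactly the principal submatrix of $A_{K,Y}$ obtained by restricting to the rows and columns indexed by the points of $X$ (which is legitimate precisely because $X \subseteq Y$ and the points of $X$ are pairwise distinct, hence correspond to distinct indices in $Y$). A principal submatrix of a symmetric positive definite matrix is symmetric positive definite: for any nonzero $c \in \mathbb{R}^n$, extend it by zeros to a vector $\tilde c \in \mathbb{R}^{|Y|}$ supported on the indices of $X$; then $c^T A_{K,X} c = \tilde c^{\,T} A_{K,Y} \tilde c > 0$. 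Hence $A_{K,X}$ is positive definite. Since $X$ was an arbitrary finite set of pairwise distinct points in $\mathbb{R}^d$, this shows $K$ is positive definite, completing the proof.

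The only mild subtlety—and the one point I would be careful to state cleanly—is the bookkeeping that identifies $A_{K,X}$ with a principal submatrix of $A_{K,Y}$: one must check that the embedding $X \hookrightarrow Y$ is injective, i.e.\ that distinct points of $X$ map to distinct index tuples in the grid $Y$. This is immediate because two points of $X$ that agree in every projected coordinate are equal, contradicting pairwise distinctness of $X$. Everything else is routine: positive semi-definiteness of $K$ is already guaranteed by Theorem~\ref{thm:positive_semidefinite}, so only the strictness (nondegeneracy) on pairwise distinct point sets requires argument, and that is exactly what the submatrix inheritance supplies. No continuity, Fourier-analytic, or translation-invariance assumptions are needed, so this genuinely generalizes Theorem~\ref{thm:prodfunc_multi}.
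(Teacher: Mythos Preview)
Your proposal is correct and follows essentially the same argument as the paper: embed $X$ into a grid-like set $Y=\bigtimes_{i=1}^M Y^i$, apply Corollary~\ref{grid_pd} to obtain positive definiteness of $A_{K,Y}$, and conclude by noting that $A_{K,X}$ is a principal submatrix of $A_{K,Y}$. Your write-up is in fact more explicit than the paper's in spelling out the construction of $Y^i$, the injectivity of $X\hookrightarrow Y$, and the zero-extension argument for principal submatrices.
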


\begin{proof}
	Let $X = \{x_1,\dots,x_n \} \subset \mathbb{R}^d$ contain pairwise distinct points. 
	Then, we find a grid-like superset
	\begin{align*}
		Y = \bigtimes_{i=1}^M  Y^i
	\end{align*}
	of $X$, i.e., $X \subset Y$, where each $Y^i \subset \mathbb{R}^{d_i}$ contains pairwise distinct points, for $i = 1,\ldots, M$ (cf.~Figure~\ref{fig:randTogrid}). 
	Due to Corollary~\ref{grid_pd}, the interpolation matrix $A_{K,Y}$ is positive definite. 
	But then, the matrix $A_{K,X}$ is a submatrix of $A_{K,Y}$, and so $A_{K,X}$ must be positive definite.
\end{proof}

\begin{figure}[h!]
	\begin{center}
		\fbox{
			\includegraphics[width=9cm]{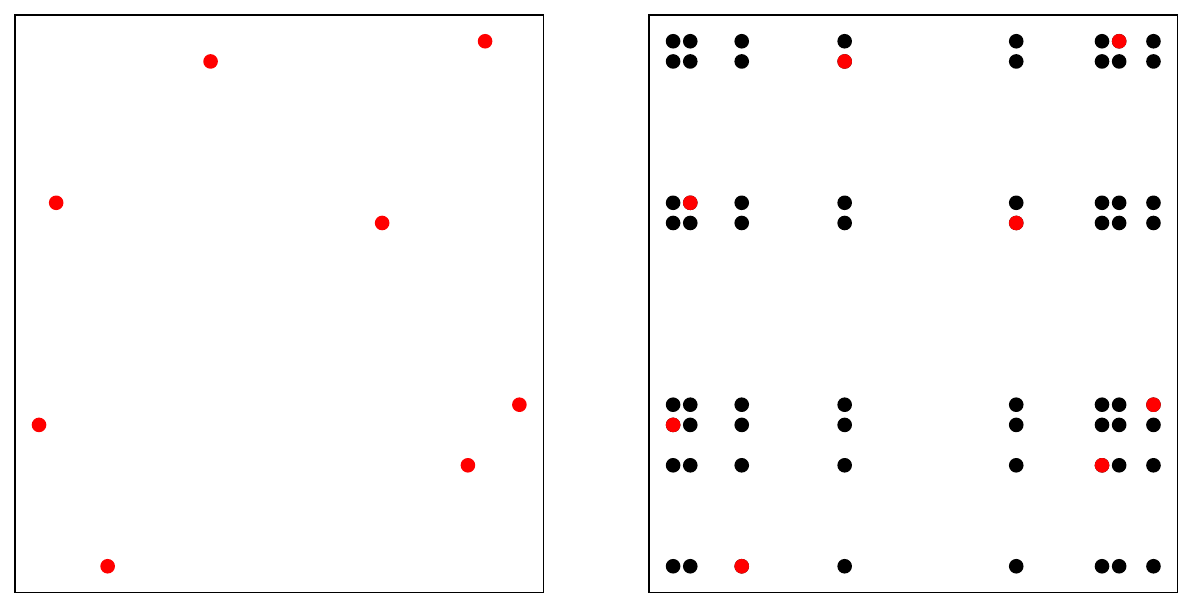}
		}
	\end{center}
	\caption{Scattered points $X$ (left) and grid-like points including $X$ (right).}
	\label{fig:randTogrid}
\end{figure}

From our previous results, we can further conclude that for tensor target functions $f$, their kernel interpolant $s_f$ is also a tensor.
We detail this as follows.

\begin{corollary} \label{tensor_interpolant}
Let $K$ be a product kernel of positive definite components 
and let $X = \{ x_1 , \ldots , x_n \} \subset \mathbb{R}^d$ be a finite set of pairwise distinct points.
Then, for any data vector $f_X = \left( f(x_1), \ldots , f(x_n) \right)^T \in \mathbb{R}^n$
of function values there is a unique interpolant $s_{f,X} \in S_{K, X}$ satisfying 
	\begin{align*}
		s_{f,X} (x) = f(x) \qquad \mbox{ for all } x \in X.
	\end{align*}
\end{corollary}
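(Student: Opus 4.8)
The plan is to read the claim off directly from the positive definiteness of $K$ established in Theorem~\ref{thm:posdef}, since well-posedness of the interpolation problem is equivalent to invertibility of the interpolation matrix. First I would invoke Theorem~\ref{thm:posdef}: because every component $K_i$ is positive definite, the product kernel $K$ is positive definite on $\mathbb{R}^d$, and hence for the given finite, pairwise distinct set $X=\{x_1,\dots,x_n\}$ the matrix $A_{K,X}=(K(x_i,x_j))_{1\le i,j\le n}$ is symmetric and positive definite, in particular invertible.

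Next I would translate the interpolation problem into this matrix equation. By the reproducing property, $A_{K,X}$ is the Gram matrix of the functions $K(\cdot,x_1),\dots,K(\cdot,x_n)$ in the native space of $K$; its invertibility therefore means that these functions are linearly independent and hence form a basis of $S_{K,X}=\operatorname{span}\{K(\cdot,x_i)\mid i=1,\dots,n\}$. Consequently, every $s\in S_{K,X}$ admits a unique representation $s=\sum_{i=1}^{n} c_i\,K(\cdot,x_i)$ in the form~\eqref{eq:interpolant_form}, and by the computation leading to~\eqref{eq:intpol_system}, such an $s$ satisfies the interpolation conditions $s(x_j)=f(x_j)$ for $j=1,\dots,n$ if and only if $A_{K,X}\,c=f_X$ with $c=(c_1,\dots,c_n)^T$ and $f_X=(f(x_1),\dots,f(x_n))^T$. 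Since $A_{K,X}$ is invertible, this linear system has the unique solution $c=A_{K,X}^{-1}f_X$, which yields the desired unique interpolant $s_{f,X}\in S_{K,X}$.

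I do not expect a genuine obstacle here: the statement is a bookkeeping consequence of Theorem~\ref{thm:posdef} together with the standard equivalence between kernel interpolation and solving~\eqref{eq:intpol_system}. The only point that needs a word of care is that uniqueness is asserted \emph{within} the space $S_{K,X}$, which requires linear independence of $\{K(\cdot,x_i)\}$ — but this is again exactly the invertibility of $A_{K,X}$, so nothing new is needed. If one additionally wants the tensor statement announced just before the corollary, one would specialize to grid-like $X$ and to a target value array arising from a tensor $f=\varphi(f_1,\dots,f_M)$, and use Theorem~\ref{thm:interpoMatrixAsKronecker} to see that $c=A_{K,X}^{-1}f_X$ factors as a Kronecker product of the component coefficient vectors, so that $s_{f,X}$ is itself a tensor; for the corollary as stated, however, this refinement is not required.
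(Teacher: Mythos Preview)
Your proposal is correct and follows exactly the paper's approach: the paper's proof is the single sentence that the assertion is a direct consequence of Theorem~\ref{thm:posdef}, and you have simply spelled out that consequence in detail. Your remarks about the Gram matrix, linear independence, and the tensor refinement are accurate but go beyond what the corollary requires (the tensor structure is the content of the separate Corollary~\ref{tensor_interpolant2}).
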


\begin{proof}
The stated assertion is a direct consequence of Theorem~\ref{thm:posdef}. 
\end{proof}

\begin{corollary} \label{tensor_interpolant2}
Let $K$ be a product kernel of positive definite components, let $f \in \mathcal{H}_{K,\Omega}$ be a target function of tensor product form
	\begin{equation}
	\label{f:tensor:product}
	    f = \prod_{i=1}^M f_i \qquad \mbox{ with } f_i \in \mathcal{H}_{K_i,\Omega_i} \quad \mbox{ for all $i = 1,\dots, M$},
	\end{equation}
and let $X = \bigtimes\limits_{i=1}^{M} X^i$ be grid-like, where 
each $X^i \subset \Omega_i \subset \mathbb{R}^{d_i}$ contains pairwise distinct points,
for $i = 1,\ldots, M$, and $\Omega := \bigtimes_{i=1}^{M} \Omega_i$.	
Then, the unique interpolant $s_{f,X} \in S_{K, X}$ to $f$ on $X$ is given by
	\begin{align*}
		s_{f,X} = \prod_{i=1}^M s_{f_i,X^i},
	\end{align*}
	where $s_{f_i,X^i}\in S_{K_i,X^i} $ is the unique interpolant to $f_i$ on the points in $X^i$.
\end{corollary}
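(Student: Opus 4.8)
The plan is to verify that the candidate function $\prod_{i=1}^M s_{f_i,X^i}$ lies in $S_{K,X}$ and satisfies the interpolation conditions on $X$; the uniqueness part then follows immediately from Theorem~\ref{thm:posdef}, since $K$ is positive definite and $X$ is a finite set of pairwise distinct points, so the interpolant in $S_{K,X}$ is unique.

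First I would check membership in $S_{K,X}$. Each component interpolant has the form $s_{f_i,X^i} = \sum_{k=1}^{N_i} c^{(i)}_k K_i(\cdot, x^i_k)$ for suitable coefficients, where $X^i = \{x^i_1,\dots,x^i_{N_i}\}$. Forming the product over $i$ and expanding, one obtains
\begin{align*}
  \prod_{i=1}^M s_{f_i,X^i}(x) = \sum_{k_1=1}^{N_1}\cdots\sum_{k_M=1}^{N_M} \Big(\prod_{i=1}^M c^{(i)}_{k_i}\Big) \prod_{i=1}^M K_i(x^i, x^i_{k_i}) = \sum_{(k_1,\dots,k_M)} \Big(\prod_{i=1}^M c^{(i)}_{k_i}\Big) K\big(x, (x^1_{k_1},\dots,x^M_{k_M})\big),
\end{align*}
using the definition of the product kernel. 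Since $X = \bigtimes_{i=1}^M X^i$ is grid-like, the points $(x^1_{k_1},\dots,x^M_{k_M})$ ranging over all index tuples are exactly the points of $X$, so this is a linear combination of $\{K(\cdot, x) \mid x \in X\}$, i.e.\ an element of $S_{K,X}$. (Alternatively one can phrase this via Theorem~\ref{thm:interpoMatrixAsKronecker}: the coefficient vector of the product interpolant is the Kronecker product of the component coefficient vectors, and $A_{K,X}$ is the Kronecker product of the $A_{K_i,X^i}$.)

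Next I would verify the interpolation conditions. For any $x \in X$, write $x = (x^1,\dots,x^M)$ with $x^i \in X^i$. Then
\begin{align*}
  \Big(\prod_{i=1}^M s_{f_i,X^i}\Big)(x) = \prod_{i=1}^M s_{f_i,X^i}(x^i) = \prod_{i=1}^M f_i(x^i) = f(x),
\end{align*}
where the middle equality uses that $s_{f_i,X^i}$ interpolates $f_i$ on $X^i$ (and each $x^i$ indeed lies in $X^i$ because $X$ is grid-like), and the last equality is the tensor product form~\eqref{f:tensor:product} of $f$ evaluated at $x$. Thus $\prod_{i=1}^M s_{f_i,X^i}$ is an element of $S_{K,X}$ interpolating $f$ on $X$, and by the uniqueness guaranteed by Theorem~\ref{thm:posdef} it must equal $s_{f,X}$.

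There is no real obstacle here: the only points requiring a moment's care are that the component interpolants are well-defined (which needs $X^i$ pairwise distinct and $K_i$ positive definite — both assumed) and that grid-likeness is used twice, once to identify the expanded product as a combination of kernels centered at points of $X$ and once to guarantee that every coordinate projection $x^i$ of $x \in X$ actually lies in $X^i$. If desired, one could additionally note that $s_{f_i,X^i} \in \mathcal{H}_{K_i,\Omega_i}$ so that $\prod_i s_{f_i,X^i} \in \mathcal{H}_{K,\Omega}$ via the map $\varphi$ of Theorem~\ref{thm:products}, making the statement consistent with Theorem~\ref{thm:native_tensor}, but this is not needed for the proof.
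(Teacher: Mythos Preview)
Your proof is correct and follows essentially the same approach as the paper: define the candidate $g = \prod_{i=1}^M s_{f_i,X^i}$, show $g \in S_{K,X}$ and $g(x)=f(x)$ for all $x\in X$, then invoke uniqueness (the paper cites Corollary~\ref{tensor_interpolant}, which is itself an immediate consequence of Theorem~\ref{thm:posdef}). In fact you spell out explicitly the ``elementary calculations'' the paper only alludes to, and your remark that the detour through $\mathcal{H}_{K,\Omega}$ via Theorem~\ref{thm:products} is unnecessary is well taken, since membership in $S_{K,X}$ is already established directly.
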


\begin{proof}
Regard the product
	\begin{align*}
		g = \prod_{i=1}^M s_{f_i,X^i}.
	\end{align*}
Note that all factors $s_{f_i,X^i}$ are well-defined due to the assumptions on $X^i$ and $K_i$. 
As $s_{f_i,X^i} \in S_{K_i,X^i} \subset \mathcal{H}_{K_i,\Omega_i}$, we have $g \in \mathcal{H}_{K,\Omega}$, due to Theorem \ref{thm:products}. 
By elementary calculations, we find $g \in S_{K,X}$ and $g(x) = f(x)$ for all $x \in X$. 
But this implies
	\begin{align*}
		s_{f,X} = g = \prod_{i=1}^M s_{f_i,X^i},
	\end{align*}
since the interpolant $s_{f,X} $ to $f$ on $X$ is unique in $S_{K,X}$ by Corollary~\ref{tensor_interpolant}.
\end{proof}

\subsection{Stability}\label{sec:stability}
The main indicator for the stability of kernel-based reconstruction methods is the \textit{spectral condition number} of the interpolation matrix $A_{K,X}$, given by
\begin{equation}
\label{condition:number}
	\text{cond}_2 (A_{K,X}) = \frac{\sigma_{\max}(A_{K,X})}{\sigma_{\min}(A_{K,X})},
\end{equation}
where $\sigma_{\max}(A_{K,X})$ and $\sigma_{\min}(A_{K,X})$ denote the largest and smallest (positive) eigenvalues of $A_{K,X}$. 
In fact, $\text{cond}_2 (A_{K,X})$ quantifies the sensitivity of the linear system with respect to perturbations in the function values taken from $f$ on $X$.

From our previous subsection, we can further conclude that the condition number of the product interpolation matrix is given by the product of the individual condition numbers of the component interpolation matrices.

\begin{theorem} \label{product_eigenvalues}
	Let the assumptions of Theorem \ref{thm:interpoMatrixAsKronecker} hold. Then, the condition number of the corresponding product interpolation matrix is given by
	\begin{align*}
		\text{{\rm cond}}_2 (A_{K,X}) = \prod\limits_{i=1}^M \text{{\rm cond}}_2 (A_{K_i,X^i}).
	\end{align*}
\end{theorem}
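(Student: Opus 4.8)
The plan is to reduce the statement to the spectral behaviour of Kronecker products, using Theorem~\ref{thm:interpoMatrixAsKronecker} as the bridge. First I would invoke that theorem to fix an ordering of $X$ for which $A_{K,X} = \bigotimes_{i=1}^M A_{K_i,X^i}$. Since each $A_{K_i,X^i}$ is symmetric and (by the assumptions of Theorem~\ref{thm:interpoMatrixAsKronecker}) positive semi-definite, I would note that the condition number \eqref{condition:number} is well-defined only when all component matrices are in fact positive definite; so strictly speaking one should either assume positive definite components here (so that $\sigma_{\min}>0$ throughout) or restrict to the nonzero part of the spectrum. I would state this explicitly and then work with positive definite components, so that each $A_{K_i,X^i}$ has a full set of strictly positive eigenvalues $\lambda^{(i)}_1 \ge \dots \ge \lambda^{(i)}_{N_i} > 0$.

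The key algebraic fact is the standard eigenvalue formula for Kronecker products: if $A$ has eigenvalues $\{\lambda_j\}$ and $B$ has eigenvalues $\{\mu_k\}$, then $A\otimes B$ has eigenvalues $\{\lambda_j \mu_k\}$ (with multiplicities), obtained by tensoring the corresponding eigenvectors; see \cite[Theorem~4.2.12]{Horn_MatrixAna}. Iterating this over the $M$ factors, the spectrum of $A_{K,X}$ is exactly $\bigl\{ \prod_{i=1}^M \lambda^{(i)}_{j_i} : 1 \le j_i \le N_i \bigr\}$. Because every $\lambda^{(i)}_{j_i}$ is strictly positive, the maximum of these products is the product of the maxima and the minimum is the product of the minima, i.e.
\begin{align*}
\sigma_{\max}(A_{K,X}) = \prod_{i=1}^M \sigma_{\max}(A_{K_i,X^i}), \qquad \sigma_{\min}(A_{K,X}) = \prod_{i=1}^M \sigma_{\min}(A_{K_i,X^i}).
\end{align*}
Dividing these two identities and using that the product of ratios equals the ratio of products then yields $\text{cond}_2(A_{K,X}) = \prod_{i=1}^M \text{cond}_2(A_{K_i,X^i})$, which is the claim. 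Since the condition number is invariant under the reordering of points used in Theorem~\ref{thm:interpoMatrixAsKronecker}, this is independent of that choice.

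I do not expect a serious obstacle here; the proof is essentially the observation that the max (resp.\ min) of a set of products of positive numbers, ranging over independent index sets, factorizes. The only genuinely delicate point — more a matter of careful bookkeeping than of difficulty — is the positivity hypothesis: for a general grid-like $X$ the projections $X^i$ are pairwise distinct by definition, but one still needs the component kernels to be positive definite (not merely positive semi-definite) for $\sigma_{\min}(A_{K_i,X^i})>0$ and hence for the condition numbers to be finite; I would make this assumption explicit in the statement, mirroring Corollary~\ref{grid_pd}. A secondary point worth a sentence is that the factorization of $\sigma_{\min}$ would fail if any factor could contribute a zero eigenvalue, which is exactly why positive definiteness of every component is required rather than just of the product.
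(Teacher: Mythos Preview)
Your proof is correct and follows essentially the same route as the paper: invoke Theorem~\ref{thm:interpoMatrixAsKronecker} to write $A_{K,X}$ as a Kronecker product, apply \cite[Theorem~4.2.12]{Horn_MatrixAna} to obtain the spectrum as all products of component eigenvalues, and then factor the ratio of the extremal eigenvalues. Your explicit remark that the components must actually be positive definite (not merely positive semi-definite, as literally inherited from Theorem~\ref{thm:interpoMatrixAsKronecker}) for $\sigma_{\min}>0$ and hence for the condition numbers to be finite is a useful clarification that the paper leaves implicit.
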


\begin{proof}
	According to \cite[Theorem~4.2.12]{Horn_MatrixAna} and Theorem \ref{thm:interpoMatrixAsKronecker}, the spectrum $\sigma(A_{K,X})$ of $A_{K,X}$ can be written as
	\begin{align*}
		\sigma(A_{K,X}) = \Bigg\{ \prod_{i=1}^{M} \sigma^i \; \Bigg \vert \; \sigma^i \in \sigma(A_{K_i,X^i}) \text{ for } i = 1, ... , M \Bigg\},
	\end{align*}
	which directly implies
	\begin{align*}
		\text{cond}_2(A_{K,X}) = \frac{\sigma_{\max}(A_{K,X})}{\sigma_{\min}(A_{K,X})} =  \frac{\prod\limits_{i = 1}^{M} \sigma_{\max} (A_{K_i, X^i})}{\prod\limits_{i = 1}^{M} \sigma_{\min} (A_{K_i, X^i})} = \prod\limits_{i=1}^M \text{cond}_2 (A_{K_i,X^i}).
	\end{align*}
	This completes our proof already.         
\end{proof}

As an immediate conclusion from Theorem~\ref{product_eigenvalues} we remark that, for the sake of stability, the component kernels $K_i$ should be chosen, such that they generate sufficiently small spectral condition numbers on the component data $X^i$, for $i = 1, ... , M$. A supporting numerical result is provided in Section~\ref{numeric}.

Now we are in a position, where we can extend the result of Theorem~\ref{product_eigenvalues} to more general finite point sets $X \subset \mathbb{R}^d$.
In fact, any finite $X \subset \mathbb{R}^d$ can be completed to obtain a grid-like superset $Y \subset \mathbb{R}^d$, i.e., $X \subset Y$, 
(cf.~Figure~\ref{fig:randTogrid}). This is according to the construction in the proof of Theorem~\ref{thm:posdef}.

\begin{corollary} \label{cor:product_eigenvalues}
	For $i=1,\dots,M$, let $K_i$ be positive definite kernels on $\Omega_i \subset \mathbb{R}^{d_i}$ and let $K$ 
	be their product kernel on $\Omega = \bigtimes_{i=1}^M \Omega_i \subset \mathbb{R}^d$, where $d = \sum_{i=1}^M d_i$. 
	Moreover, let $X \subset \bigtimes_{i=1}^M \Omega_i$ contain finitely many pairwise distinct data points, and
	let $Y = Y^1\times \dots\times Y^M \in \bigtimes_{i=1}^M \Omega_i$ be a grid-like superset of $X$, i.e., $X \subset Y$.
	Then, the following statements hold.
\begin{enumerate}
		\item[(a)] $\text{\normalfont cond}_2 ({A}_{K,Y}) = \prod\limits_{i=1}^M \text{\normalfont cond}_2 ({A}_{K_i,Y^i}).$ 
		\item[(b)] $\text{\normalfont cond}_2 ({A}_{K,X}) \leq \text{\normalfont cond}_2 ({A}_{K,Y}).$ 
		\item[(c)] $\lambda_{\min} ({A}_{K,X}) \geq \prod\limits_{i=1}^M \lambda_{\min} ({A}_{K_i,Y^i}).$ 
\end{enumerate}
\end{corollary}

\begin{proof}
Statement~(a) is covered by Theorem~\ref{product_eigenvalues}.
Statement~(b) follows directly from the Cauchy interlacing theorem, since ${A}_{K,X}$ is a principal submatrix of ${A}_{K,Y}$, whereby we have
	\begin{equation*}
		\lambda_{\min}({A}_{K,Y}) \leq \lambda_{\min}({A}_{K,X}) 
		\qquad \text{ and } \qquad
		\lambda_{\max}({A}_{K,X}) \leq \lambda_{\max}({A}_{K,Y}).
	\end{equation*}  

But this, in combination with Theorem~\ref{product_eigenvalues}, implies statement~(c).
\end{proof}

For commonly used kernel functions, estimates on the eigenvalues of their interpolation matrices can be found in the literature, 
see e.g.~\cite[Theorem~12.3]{wendland}. 
These can obviously be combined with Theorem \ref{product_eigenvalues} to derive condition number estimates for interpolation matrices of product kernels.

\section{Tensor Newton basis} \label{Newton_basis}
In \cite{Muller2009article}, a Newton basis for kernel-based interpolation was introduced to achieve a stable and efficient reconstruction method. The construction of this orthonormal basis is based on the Cholesky decomposition of the interpolation matrix $A_{K,X}$. We use the properties of product kernels to reduce the computational costs for computing the Newton basis significantly. This is particularly useful for update strategies and the implementation of greedy algorithms, see Section~\ref{sec:greedy}.
On this occasion and in view of possible extensions of our theory to conditionally positive definite kernels, 
we remark that full-rank orthonormal bases for such kernels were recently made available~\cite{Mohammadi2024}.
Nevertheless, we restrict ourselves to positive (semi-)definite kernels.

To this end, we assume the conditions of Theorem \ref{thm:interpoMatrixAsKronecker} to hold, such that the interpolation matrix is given by the Kronecker product
\begin{align*}
    A_{K,X} = \myotimes{i=1}{M} A_{K_i,X^i}.
\end{align*}
The Cholesky factor $L$ of $A_{K,X}$ is given by the Kronecker product
\begin{align} \label{cholesky_factor}
    L = \myotimes{i=1}{M} L_i,
\end{align}
where $L_i$ denotes the Cholesky factor of $A_{K_i,X^i}$ for $i = 1, ... , M$. Note that the Cholesky factor coincides with the Vandermonde matrix of the Newton basis. This identity indicates that we can combine the individual Newton bases in the component spaces to compute the Newton basis of the product kernel.

Due to the structure of the point set, we can write the approximation space $S_{K, X}$ as the Hilbert tensor product
\begin{align*}
    S_{K,X} = \myotimes{i=1}{M} S_{K_i,X^i}.
\end{align*}
If $\mathcal{N}_i$ denotes the Newton basis of $S_{K_i,X^i}$, then Theorem \ref{thm:hilbert_tensor}~(b) in combination with Theorem \ref{thm:products} proves that
\begin{align} \label{tensor_basis}
    \mathcal{N} := \Big\lbrace \prod_{i=1}^M \mathfrak{n}_i \; \Big\vert \; (\mathfrak{n}_1,...,\mathfrak{n}_M) \in \mathcal{N}_1 \times ... \times \mathcal{N}_M \Big\rbrace
\end{align}
is an orthonormal basis of $S_{K,X}$. 
Then, the ordering in Theorem \ref{thm:interpoMatrixAsKronecker} on the Vandermonde matrix of $\mathcal{N}$ with respect to $X$ yields the Cholesky factor (\ref{cholesky_factor}), so that these two approaches result in the same basis. We refer to this basis as the \textit{tensor Newton basis} of $S_{K,X}$.

For an efficient implementation of the tensor Newton basis, we use the update formulae from \cite{Pazouki2011} to compute the evaluation matrices
\begin{align*}
    V_i = \left( \mathfrak{n}(x^i) \right)_{\substack{x^i \in X^i \\ \mathfrak{n} \in \mathcal{N}_i}},
\end{align*}
where $\Omega_i \subset \mathbb{R}^{d_i}$ is the $i$-th component of the initial domain. In this case, the evaluation matrix of the tensor Newton basis at the point set $\Omega = \bigtimes_{i=1}^M \Omega_i$ is again given by the Kronecker product of the individual Vandermonde matrices
\begin{align*}
    V = \left( \mathfrak{n} (x) \right) _{\substack{x \in X \\ \mathfrak{n} \in \mathcal{N}}} = \myotimes{i=1}{M} V_i.
\end{align*}
In order to determine the coefficients of the interpolant with respect to the tensor Newton basis, 
we need to solve the triangular linear system, whose matrix is the Cholesky factor in (\ref{cholesky_factor}).
We point out, that one has to take particular care of the right ordering of the involved arrays when implementing this algorithm.

\section{Componentwise point selection} \label{sec:greedy}
In kernel-based interpolation, the geometric distribution of the data points is critical to both the approximation quality and the numerical stability of the reconstruction.
This has motivated the development of \textit{greedy point selection methods}, dating back to the construction of efficient \textit{thinning algorithms}~\cite{Floater_AI:1996} 
that were later refined in~\cite{Floater_AI:1998}, and optimized in~\cite{Iske_Habil}. 
For more recent contributions to greedy algorithms for kernel interpolation, 
especially for their analysis, we refer to~\cite{santin2023optimality,wenzel2021novel,Wenzel2023standard}. 

To briefly explain greedy point selection methods, these are recursive point removal (or insertion) schemes that require, at each removal step,
efficient computations of maximizers of kernel-specific error functionals, along with efficient update strategies.
Since the above mentioned greedy algorithms~\cite{DeMarchi2005,Floater_AI:1996,Floater_AI:1998,Iske_Habil,Wenzel2023standard} 
work for general positive definite kernel functions, they can in particular be applied to positive definite product kernels.

However, on given initial grid-like point set, the reduced point set (output by a greedy algorithm) will in general not be grid-like anymore.
Therefore, we cannot immediately rely on the findings of Section \ref{interpolation}, where computational advantages on grid-like data were explained. 

Therefore, we now propose more sophisticated point selection strategies that act on the product kernel components $K_i$ and maintain the grid-like structure of the interpolation point set, where we restrict the componentwise selection to subsets $\Omega_i \subset \mathbb{R}^{d_i}$ for $i = 1, ..., M$. 
This leads us to nested sequences $\left( X_n \right)_{n \in \mathbb{N}}$ in $\Omega = \bigtimes_{i=1}^M \Omega_i \subset \mathbb{R}^d$ that have the form
\begin{align*}
  X_n = X^1_n \times ... \times X^M_n \qquad \text{ where } X^i_n \subset \Omega_i \; \text{ for } i=1, ... , M, \; n \in \mathbb{N}.
\end{align*}

To preserve the grid-like structure, we will successively determine an update index $i_n \in \lbrace 1, ... , M \rbrace$ and extend the respective component point set $X_n^{i_n}$ with an element $x^{i_n} \in \Omega_{i_n} \setminus X_n^{i_n}$. Thereby, the next point sets are given by
\begin{align} \label{component_update}
  X^{i}_{n+1} =
  \begin{cases}
    X^{i}_n, & \; \text{if } i \neq i_n \\[1.5ex]
    X_n^{i_n} \cup \lbrace x^{i_n} \rbrace ,& \; \text{if } i = i_n
  \end{cases} \quad \text{and} \quad X_{n+1} = X_{n+1}^1 \times ... \times X_{n+1}^M.
\end{align}
According to our discussion in Section~\ref{Newton_basis}, we only have to update the Newton basis $\mathcal{N}_{i_n}$ of the component space $S_{K_i, X^{i_n}_n}$ to update the Newton basis of the product kernel. If $\mathfrak{n}_{i_n} \in S_{K_i, X^{i_n}_{n+1}}$ is the basis element that updates $\mathcal{N}_{i_n}$, the functions
\begin{align} \label{basis_update}
  \mathcal{N}_{{\rm new}} = 
  \Big\lbrace \mathfrak{n}_{i_n} \cdot \prod\limits_{ i \neq i_n} \mathfrak{n}_i \; \Big\vert \; 
  \left( \mathfrak{n}_1, ... , \mathfrak{n}_{i_n - 1}, \mathfrak{n}_{i_n + 1}, ... , \mathfrak{n}_M \right) \in \bigtimes\limits_{i \neq i_n} \mathcal{N}_i \Big\rbrace
\end{align}
update the Newton basis $\mathcal{N}$ of the product kernel. Given a fixed target function $f \in \mathcal{H}_{K,\Omega}$, the new interpolant $s_{f,X_{n+1}}$ can be written as
\begin{align*}
  s_{f,X_{n+1}} = s_{f,X_n} + \sum\limits_{\mathfrak{n} \in \mathcal{N}_{{\rm new}}} c_{\mathfrak{n}} \cdot \mathfrak{n},
\end{align*}
where the coefficients $c_{\mathfrak{n}} \in \mathbb{R}$ of the new basis elements have to be computed. Similar to the single point strategies, we can derive an implicit formula for the missing coefficients. To this end, consider the update point set
\begin{align*}
  X_{{\rm new}} := X^1_n \times ... \times X^{i_n - 1}_n \times \lbrace x^{i_n} \rbrace \times X^{i_n + 1}_n \times ... \times X^M_n.
\end{align*}
For every point $x \in X_{{\rm new}}$, we have 
\begin{align*}
  f(x) = s_{f,X_{n+1}} (x) = s_{f,X_n}(x) + \sum\limits_{\mathfrak{n} \in \mathcal{N}_{{\rm new}}} c_{\mathfrak{n}} \cdot \mathfrak{n}(x),
\end{align*}
or equivalently,
\begin{align*}
  f(x) - s_{f,X_n} (x) = \sum\limits_{\mathfrak{n} \in \mathcal{N}_{{\rm new}}} c_{\mathfrak{n}} \cdot \mathfrak{n}(x).
\end{align*}
Thus, the missing coefficients are given by the solution of the linear system
\begin{align} \label{update_system}
  V_{\mathcal{N}_{{\rm new}}, X_{{\rm new}}} \cdot c = r_{X_{{\rm new}}},
\end{align}
where the involved arrays are given by
\begin{align*}
  V_{\mathcal{N}_{{\rm new}}, X_{{\rm new}}} = \left( \mathfrak{n}(x) \right)_{\substack{x \in X_{{\rm new}} \\ \mathfrak{n} \in \mathcal{N}_{{\rm new}}}} 
  \qquad \text{and} \qquad
  r_{X_{{\rm new}}} = \left( f(x) - s_{f,X_n} (x) \right)_{x \in X_{{\rm new}}}.
\end{align*}
Due to the structure of $\mathcal{N}_{{\rm new}}$ (cf.~(\ref{basis_update})), we can write
\begin{align*}
  V_{\mathcal{N}_{{\rm new}}, X_{{\rm new}}} = \mathfrak{n}_{i_n} (x^{i_n}) \cdot \bigotimes\limits_{i \neq i_{n}} L_i,
\end{align*}
where $L_i$, for $i = 1, ... , M$, are the Cholesky factors in~(\ref{cholesky_factor}).

For the componentwise point selection, we will focus on the \textit{power function}
\begin{align*}
  P_{X_n} (x) := \Vert K(\cdot , x) - s_{K(\cdot,x),X_n} \Vert_K \qquad \text{ for } x \in \Omega.
\end{align*}
We consider the power functions with respect to the component kernels, i.e.,
\begin{align*}
  P_{X_n^i} (x^i) := \Vert K_i(\cdot , x^i) - s_{K_i(\cdot,x^i),X_n^i} \Vert_{K_i} \qquad \text{for } x^i \in \Omega_i \mbox{ and } \; i= 1, ... , M.
\end{align*}
Recall that $\mathfrak{n}_{i_n} (x^{i_n}) = P_{X_n^{i_n}} (x^{i_n})$ holds~\cite{Pazouki2011}, 
so that~(\ref{update_system}) can be written as the triangular system
\begin{align} \label{triangular_updateSystem}
  \bigotimes\limits_{i \neq i_{n}} L_i \cdot c = P_{X^{i_n}_{n}} (x^{i_n})^{-1} \cdot r_{X_{{\rm new}}}.
\end{align}
Note that we divide by $P_{X^{i_n}} (x^{i_n})$ in~(\ref{triangular_updateSystem}). Likewise, we need to divide by $P_{X^{i_n}} (x^{i_n})$, when performing the Newton basis update in~\cite{Pazouki2011}. Hence, to improve the numerical stability, it is reasonable to maximize the power function value in each iteration step. As a single point update, this algorithm is known as the \textit{P-greedy algorithm}~\cite{DeMarchi2005}. 
We introduce its componentwise version here, which we will refer to as the \textit{componentwise P-greedy algorithm}. 

For an update index $i_n \in \lbrace 1, ... , M \rbrace$ satisfying
\begin{align} \label{Pgreedy_index}
  \underset{y^{i_n} \in \Omega_{i_n}}{\sup} \; P_{X_n^{i_n}}(y^{i_n}) =: \Vert P_{X_n^{i_n}} \Vert_{\infty, \Omega_{i_n}} = \underset{i =1, ... , M}{\max} \; \Vert P_{X_n^{i}} \Vert_{\infty, \Omega_i}
\end{align}
we select the update point from $\Omega_{i_n}$ via
\begin{align} \label{Pgreedy_update}
  P_{X_n^{i_n}} (x^{i_n}) = \underset{y^{i_n} \in \Omega_{i_n}}{\sup} \; P_{X_n^{i_n}}(y^{i_n}).
\end{align}
Then, the updated point set $X_{n+1}$ is given by~(\ref{component_update}). 
To prove convergence for the componentwise P-greedy algorithm, we need the following two lemmas.

\begin{lemma} \label{power_product}
  Let $K = \prod_{i=1}^{M} K_i$ denote a product kernel with positive definite components and 
  $X = X^1 \times \cdots \times X^M \subset \mathbb{R}^d$ be a grid-like point set. 
  Then, the power function can be written as
  \begin{align*}
    \left( P_{X} (x) \right)^2 = \prod_{i=1}^{M} K_i(x^i, x^i) - \prod_{i=1}^{M} \left( K_i(x^i, x^i) - \left( P_{X^i} (x^i) \right)^2 \right) \qquad \text{for } x \in \mathbb{R}^d,
  \end{align*}
  where $x^i$ denotes the $i$-th component of $x \in \mathbb{R}^d$ for $i = 1, ... , M$.
\end{lemma}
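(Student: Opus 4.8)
The plan is to use the well-known pointwise formula for the squared power function in terms of the kernel and the interpolation matrix, namely
\begin{align*}
  \left( P_{X}(x) \right)^2 = K(x,x) - R_{X}(x)^T A_{K,X}^{-1} R_{X}(x),
\end{align*}
where $R_{X}(x) := \left( K(x, x_1), \dots, K(x, x_n) \right)^T$ is the vector of kernel evaluations at the point set $X = \{x_1, \dots, x_n\}$. This is valid because $K$ is positive definite (Theorem~\ref{thm:posdef}), so $A_{K,X}$ is invertible. First I would recall that, since $X$ is grid-like, Theorem~\ref{thm:interpoMatrixAsKronecker} gives an ordering of $X$ under which $A_{K,X} = \bigotimes_{i=1}^M A_{K_i,X^i}$, hence $A_{K,X}^{-1} = \bigotimes_{i=1}^M A_{K_i,X^i}^{-1}$ by the standard mixed-product/inverse property of Kronecker products (\cite[Corollary~4.2.11]{Horn_MatrixAna} or similar). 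In the same ordering, the vector $R_X(x)$ factorizes as $R_X(x) = \bigotimes_{i=1}^M R_{X^i}(x^i)$, because $K(x, x_k) = \prod_{i=1}^M K_i(x^i, x_k^i)$ and the Kronecker ordering of $X$ is exactly the lexicographic one matching Definition~\ref{def:kroneckerProd}.

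Next, using the mixed-product property $(A \otimes B)(C \otimes D) = (AC) \otimes (BD)$ repeatedly, the quadratic form splits as a product of scalars:
\begin{align*}
  R_X(x)^T A_{K,X}^{-1} R_X(x)
  = \prod_{i=1}^M R_{X^i}(x^i)^T A_{K_i,X^i}^{-1} R_{X^i}(x^i)
  = \prod_{i=1}^M \left( K_i(x^i,x^i) - \left( P_{X^i}(x^i) \right)^2 \right),
\end{align*}
where the last equality applies the same pointwise power-function identity in each component space (legitimate since each $K_i$ is positive definite, so each $A_{K_i,X^i}$ is invertible). Substituting back and using $K(x,x) = \prod_{i=1}^M K_i(x^i,x^i)$ yields
\begin{align*}
  \left( P_X(x) \right)^2 = \prod_{i=1}^M K_i(x^i,x^i) - \prod_{i=1}^M \left( K_i(x^i,x^i) - \left( P_{X^i}(x^i) \right)^2 \right),
\end{align*}
which is the claim. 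An alternative, slightly more conceptual route would avoid matrices entirely: the power function equals the norm of the pointwise projection error $K(\cdot,x) - s_{K(\cdot,x),X}$, and $K(\cdot, x) = \prod_i K_i(\cdot, x^i)$ is a tensor whose $S_{K,X}$-orthogonal projection, by the Hilbert tensor product structure (Theorem~\ref{thm:hilbert_tensor}(b) together with Theorem~\ref{thm:products}), factors as $\prod_i s_{K_i(\cdot,x^i),X^i}$; expanding $\|\prod_i K_i(\cdot,x^i) - \prod_i s_{K_i(\cdot,x^i),X^i}\|_K^2$ via \eqref{eq:innerprod_tensor} and the Pythagorean relation $\langle K_i(\cdot,x^i), s_{K_i(\cdot,x^i),X^i}\rangle_{K_i} = \|s_{K_i(\cdot,x^i),X^i}\|_{K_i}^2 = K_i(x^i,x^i) - (P_{X^i}(x^i))^2$ gives the same telescoping product.

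The main obstacle is bookkeeping rather than conceptual: one must make sure the orderings of $X$, of $A_{K,X}$, and of $R_X(x)$ are all the single consistent Kronecker ordering from Theorem~\ref{thm:interpoMatrixAsKronecker}, so that the mixed-product property applies cleanly; with the tensor-projection argument the obstacle is instead checking that the $S_{K,X}$-orthogonal projection of a tensor really is the tensor of the componentwise projections, which follows because $S_{K,X} = \bigotimes_i S_{K_i,X^i}$ as a Hilbert tensor subspace and orthogonal projection onto a tensor-product subspace of a Hilbert tensor product acts factorwise. Either way the remaining computation is the short algebraic identity already displayed.
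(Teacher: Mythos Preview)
Your argument is correct and takes a genuinely different route from the paper. The paper's proof goes through the tensor Newton basis of Section~\ref{Newton_basis}: it writes $(P_X(x))^2 = K(x,x) - \sum_{\mathfrak{n}\in\mathcal{N}}(\mathfrak{n}(x))^2$ from orthonormality, then uses the product structure~\eqref{tensor_basis} of $\mathcal{N}$ to factor the sum as $\prod_i \sum_{\mathfrak{n}_i\in\mathcal{N}_i}(\mathfrak{n}_i(x^i))^2$, and finally rewrites each factor via the component identity. Your primary route instead uses the explicit matrix formula $(P_X(x))^2 = K(x,x) - R_X(x)^T A_{K,X}^{-1} R_X(x)$ together with Theorem~\ref{thm:interpoMatrixAsKronecker} and the mixed-product property of the Kronecker product; this is more elementary in that it needs only Section~\ref{interpolation} and standard Kronecker algebra, bypassing the Newton-basis machinery entirely. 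Your second, projection-based route is essentially Corollary~\ref{tensor_interpolant2} applied to the target $K(\cdot,x)$ plus the tensor inner-product identity~\eqref{eq:innerprod_tensor}; it is close in spirit to the paper's argument (both exploit factorwise orthogonality in the Hilbert tensor product) but avoids introducing the Newton basis explicitly. The paper's approach has the advantage of reinforcing the role of the tensor Newton basis, which is used heavily in Sections~\ref{Newton_basis}--\ref{sec:greedy}; your matrix approach has the advantage of being self-contained and slightly shorter. The bookkeeping caveat you flag (consistent Kronecker ordering for $A_{K,X}$ and $R_X(x)$) is real but routine, and your verification that $R_X(x)=\bigotimes_i R_{X^i}(x^i)$ under that ordering follows exactly as in the proof of Theorem~\ref{thm:interpoMatrixAsKronecker}.
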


\begin{proof}
  Let $\mathcal{N}$ denote the Newton basis of $S_{K,X}$ and $\mathcal{N}_i$ denote the component Newton bases of $S_{K_i,X^i}$, for $i = 1, ... , M$. 
  Due to orthonormality, we have
    $$
    \left( P_X(x) \right)^2 = K(x,x) - \sum\limits_{\mathfrak{n} \in \mathcal{N}} \left( \mathfrak{n} (x) \right)^2 
    $$
and
    $$
    \left( P_{X^i}(x^i) \right)^2 = K_i(x^i,x^i) - \sum\limits_{\mathfrak{n}_i \in \mathcal{N}_i} \left( \mathfrak{n}_i (x^i) \right)^2
    $$
  for $x \in \mathbb{R}^d, x^i \in \mathbb{R}^{d_i}$ and $i = 1, ... , M$. This, in combination with~(\ref{tensor_basis}) yields
  \begin{align*}
    \left( P_X(x) \right)^2 &= K(x,x) - \sum\limits_{\mathfrak{n} \in \mathcal{N}} \left( \mathfrak{n} (x) \right)^2 \\
    &= \prod_{i=1}^{M} K_i(x^i, x^i) - \sum_{\mathfrak{n}_1 \in \mathcal{N}} \dots \sum_{\mathfrak{n}_M \in \mathcal{N}_M} \; \prod_{i=1}^{M} \left( \mathfrak{n}_i (x^i) \right)^2 \\
    &= \prod_{i=1}^{M} K_i(x^i, x^i) - \prod_{i=1}^{M} \left( \sum_{\mathfrak{n}_i \in \mathcal{N}_i} \left( \mathfrak{n}_i(x^i) \right)^2 \right) \\
    &= \prod_{i=1}^{M} K_i(x^i, x^i) - \prod_{i=1}^{M} \left( K_i(x^i, x^i) - \left( P_{X^i} (x^i) \right)^2 \right)
  \end{align*}
  for all $x \in \mathbb{R}^d$.
\end{proof}

\begin{lemma}
\label{new-lemma}
    Let $K: \mathbb{R}^d \times \mathbb{R}^d \longrightarrow \mathbb{R}$ be positive definite and $\Omega \subset \mathbb{R}^d$. Then, the following statements hold:
    \begin{itemize}
        \item[(a)] If $\left(X_n \right)_{n \in \mathbb{N}}$ is a sequence of subsets of $\Omega$, such that the power function decays pointwise to zero on $\Omega$, i.e.
        \begin{align*}
            P_{X_n} (x) \xrightarrow{n \to \infty} 0 \qquad \text{for all } x \in \Omega,
        \end{align*}
        then we have normwise convergence for the interpolation method, i.e.,
        \begin{align*}
            \Vert f - s_{f,X_n} \Vert_K \xrightarrow{n \to \infty} 0 \qquad \text{for all } f \in \mathcal{H}_{K,\Omega}.
        \end{align*}
        \item[(b)] 
        Suppose $K$ is continuous and $\Omega \subset \mathbb{R}^d$ is compact. Then, we have
        \begin{align*}
            P_{X_n} (x_{n+1}) \xrightarrow{n \to \infty} 0
        \end{align*}
        for any sequence $\left( x_n \right)_{n \in \mathbb{N}}$ in $\Omega$, where $X_n = \left\{ x_1, \ldots , x_n \right\}$ for all $n \in \mathbb{N}$.
    \end{itemize}
\end{lemma}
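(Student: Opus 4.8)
\textbf{Proof plan for Lemma~\ref{new-lemma}.}

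The plan is to treat the two parts separately, since they rely on rather different ingredients. For part~(a), the key identity is the pointwise error bound in terms of the power function, namely
\begin{align*}
\vert f(x) - s_{f,X_n}(x) \vert \leq P_{X_n}(x) \cdot \Vert f \Vert_K \qquad \text{for all } x \in \Omega,
\end{align*}
together with the orthogonality relation $\Vert f - s_{f,X_n} \Vert_K^2 = \Vert f \Vert_K^2 - \Vert s_{f,X_n} \Vert_K^2$, which shows that $\Vert f - s_{f,X_n}\Vert_K$ is non-increasing in $n$ (as the spaces $S_{K,X_n}$ are nested once we pass to a nested exhausting sequence, or more carefully by a standard argument that the best-approximation error cannot increase when data is added). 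Hence the sequence of norms converges to some limit $\varepsilon \geq 0$; the task is to show $\varepsilon = 0$. First I would use that $s_{f,X_n}$ is the orthogonal projection of $f$ onto $S_{K,X_n}$, so it suffices to prove that $\bigcup_n S_{K,X_n}$ — or rather $\mathrm{span}\{K(\cdot,x)\mid x\in\bigcup_n X_n\}$ — is dense in $\mathcal{H}_{K,\Omega}$; but this need not hold for an arbitrary sequence. The correct route is instead: for fixed $f$, the reproducing property gives $\langle f - s_{f,X_n}, K(\cdot,x)\rangle_K = f(x)-s_{f,X_n}(x)$ for $x \in X_n$, which vanishes, and for general $x \in \Omega$ has modulus at most $P_{X_n}(x)\Vert f-s_{f,X_n}\Vert_K \leq P_{X_n}(x)\Vert f\Vert_K \to 0$. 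Writing $g := \lim_n s_{f,X_n}$ (the limit exists in $\mathcal{H}_{K,\Omega}$ because $(s_{f,X_n})$ is a bounded sequence of nested projections, hence Cauchy), one gets $\langle f-g, K(\cdot,x)\rangle_K = (f-g)(x) = \lim_n (f - s_{f,X_n})(x) = 0$ for every $x \in \Omega$, whence $f-g \perp \mathcal{S}_{K,\Omega}$, and since $\mathcal{S}_{K,\Omega}$ is dense in $\mathcal{H}_{K,\Omega}$ we conclude $f = g$, i.e. $\Vert f - s_{f,X_n}\Vert_K \to 0$.

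For part~(b), the argument is a compactness/equicontinuity argument. The power function satisfies $P_{X_n}(x)^2 = K(x,x) - \sum_{\mathfrak{n}\in\mathcal{N}_n}(\mathfrak{n}(x))^2$ where $\mathcal{N}_n$ is the Newton basis of $S_{K,X_n}$, so $(P_{X_n}(x)^2)_{n}$ is non-increasing in $n$ for each fixed $x$ (adding a point only subtracts a further non-negative term), hence converges pointwise to some limit $Q(x) \geq 0$. Moreover $P_{X_n}(x_k) = 0$ whenever $x_k \in X_n$, i.e. for $k \leq n$. Now I would invoke continuity of $K$ on the compact set $\Omega$: this gives uniform continuity of $K$, from which one derives a modulus-of-continuity bound showing the family $\{P_{X_n}\}_n$ is equicontinuous (the standard estimate is $\vert P_{X_n}(x) - P_{X_n}(y)\vert^2 \leq $ something controlled by $K(x,x) - 2K(x,y) + K(y,y)$, uniformly in $n$). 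Then, by Dini's theorem — a monotone decreasing sequence of continuous functions converging pointwise to a continuous limit on a compact set converges uniformly — we would need the limit to be continuous; equicontinuity of $\{P_{X_n}\}$ together with pointwise convergence gives uniform convergence $P_{X_n} \to Q$ with $Q$ continuous. Finally, suppose for contradiction that $P_{X_n}(x_{n+1}) \not\to 0$; then along a subsequence $P_{X_{n_j}}(x_{n_j+1}) \geq \delta > 0$, and by compactness we may assume $x_{n_j+1} \to x^\ast \in \Omega$. For any fixed $m$, once $n_j \geq m$ we have $x_{n_j+1}\in X_{n_j+1}$ so $P_{X_{n_j+1}}(x_{n_j+1}) = 0$; combining with monotonicity and uniform convergence one forces $Q(x^\ast) = 0$, yet uniform convergence and $P_{X_{n_j}}(x_{n_j+1})\geq\delta$ forces $Q(x^\ast)\geq\delta$, a contradiction.

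The main obstacle I anticipate is the equicontinuity/uniform-convergence step in part~(b): one must make precise the claim that the power functions $P_{X_n}$ form an equicontinuous family, uniformly in $n$, purely from continuity of $K$ on the compact $\Omega$. The cleanest way is via the identity $P_{X_n}(x)^2 = \Vert K(\cdot,x) - s_{K(\cdot,x),X_n}\Vert_K^2 = \langle K(\cdot,x), K(\cdot,x) - s_{K(\cdot,x),X_n}\rangle_K$, the Cauchy--Schwarz inequality, and the fact that $s_{K(\cdot,x),X_n}$ is the $\mathcal{H}_K$-orthogonal projection of $K(\cdot,x)$, so that $\Vert K(\cdot,x) - s_{K(\cdot,x),X_n}\Vert_K \leq \Vert K(\cdot,x) - K(\cdot,y)\Vert_K$ whenever $K(\cdot,y) \in S_{K,X_n}$ — and more robustly, via the bound $\vert P_{X_n}(x) - P_{X_n}(y) \vert \leq \Vert K(\cdot,x) - K(\cdot,y)\Vert_K = (K(x,x) - 2K(x,y) + K(y,y))^{1/2}$, which is independent of $n$ and tends to $0$ as $y \to x$ by continuity of $K$. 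This last inequality, once established, makes both the equicontinuity and the contradiction step routine. Parts~(a) is comparatively standard and should present no real difficulty beyond bookkeeping with the nestedness of the spaces $S_{K,X_n}$.
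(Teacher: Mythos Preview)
For part~(a), your argument hinges on the claim that $(s_{f,X_n})$ is Cauchy, which you justify by calling it ``a bounded sequence of nested projections.'' But the lemma does \emph{not} assume the sets $X_n$ are nested, and without nestedness the projections need not form a Cauchy sequence --- this is a genuine gap in what you wrote. The paper avoids the issue entirely with a more direct route: for $f = \sum_i c_i K(\cdot,y_i) \in \mathcal{S}_{K,\Omega}$, linearity of the interpolation operator gives $\Vert f - s_{f,X_n}\Vert_K \leq \sum_i |c_i|\, P_{X_n}(y_i) \to 0$, and then density of $\mathcal{S}_{K,\Omega}$ together with the uniform bound $\Vert s_{h,X_n}\Vert_K \leq \Vert h\Vert_K$ (valid for orthogonal projections) extends this to all of $\mathcal{H}_{K,\Omega}$ via a standard $3\varepsilon$ argument. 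Your approach \emph{can} be rescued without nestedness: from the pointwise bound plus $\Vert f - s_{f,X_n}\Vert_K \leq \Vert f\Vert_K$ you do get weak convergence $f - s_{f,X_n} \rightharpoonup 0$, and then the projection identity $\Vert f - s_{f,X_n}\Vert_K^2 = \langle f - s_{f,X_n}, f\rangle_K$ finishes immediately. But that is not the argument you sketched.

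For part~(b), your approach via equicontinuity, monotone pointwise convergence to a continuous limit, and a subsequential contradiction is correct, and the key estimate $\vert P_{X_n}(x) - P_{X_n}(y)\vert \leq \Vert K(\cdot,x) - K(\cdot,y)\Vert_K$ is exactly the right tool. The paper's proof, however, is considerably shorter and uses the same estimate directly: given $\varepsilon > 0$, choose $\delta$ by uniform continuity of $x \mapsto K(\cdot,x)$ on the compact $\Omega$, cover $\Omega$ by finitely many $\delta/2$-balls, and let $N$ be large enough that every ball ever visited by the sequence already contains some $x_k$ with $k \leq N$. Then for $n \geq N$, the point $x_{n+1}$ lies within $\delta$ of some $x \in X_N \subset X_n$, whence $P_{X_n}(x_{n+1}) \leq \Vert K(\cdot,x_{n+1}) - K(\cdot,x)\Vert_K < \varepsilon$. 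Both arguments rest on the same distance-to-subspace bound; the covering argument simply gets there without the detour through uniform convergence and contradiction.
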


\begin{proof}
Statement~(a) is covered by Karvonen's result~\cite[Proposition~2.1]{Karvonen2022}.

\medskip
Statement~(b) is a special case of~\cite[Lemma~4.2]{Albrecht_AI:2023}.
Nevertheless, let us adapt our proof from \cite{Albrecht_AI:2023} to the present case of Lagrange interpolation.

\smallskip
 To prove (b), let $\varepsilon > 0$. Due to our assumptions on $K$ and $\Omega$, the mapping $x \longmapsto K(\cdot, x)$ is uniformly continuous on $\Omega$, so that we find $\delta > 0$ satisfying
    \begin{align*}
        \Vert K(\cdot, y) - K (\cdot, x) \Vert_{K} < \varepsilon.
        \qquad 
        \text{ for all $x,y \in \Omega$ with } \Vert x - y \Vert < \delta.
    \end{align*} 
    Moreover, there are finitely many $y_j \in \mathbb{R}^d$, $j = 1, ... , L$, for $L \in \mathbb{N}$, such that
    \begin{align*}
        \Omega \subset \bigcup\limits_{j=1}^L B_{\delta / 2} (y_j). 
    \end{align*}
    Given this covering of $\Omega$, we let
    \begin{align*}
        I_j = \lbrace n \in \mathbb{N} \mid x_n \in B_{\delta/2} (y_j) \rbrace 
        \qquad \text{ and } \qquad 
        N_j = 
        \left\{
        \begin{array}{cl} 
            \min (I_j) & \text{ if } I_j \neq \emptyset \\[1.5ex]
            0 &\text{ if } I_j = \emptyset
        \end{array}
        \right.
    \end{align*}
    for $j = 1, ..., L$, and choose $N = \max (N_1, ... , N_L)$. 
    Then, for every $n \geq N$, there is at least one pair $(j,x)$ of an index $1 \leq j \leq L$ and a point $x \in X_N \subset X_n$, 
    such that $x_{n+1}, x \in B_{\delta/2}(y_{j})$, whereby
    \begin{align*}
        \Vert x_{n+1} - x \Vert \leq \Vert x_{n+1} - y_j \Vert + \Vert y_j - x \Vert < \delta,
    \end{align*}
    and so
    \begin{align*}
        P_{X_n} (x_{n+1}) \leq \Vert K(\cdot, x_{n+1}) - K (\cdot, x) \Vert_K < \varepsilon.
    \end{align*}
\end{proof}

Now we are finally in a position, where we can prove convergence of the tensor product interpolation method,
in situations where the power functions of the component kernels decay to zero.
But this is the case for the componentwise P-greedy algorithm, if the kernel $K$ is continuous and the domain $\Omega$ is compact.
We remark that our following result requires only very mild conditions on the kernel $K$ and the target $f$.
This is in contrast to \cite{Santin2017,Wenzel2023standard}, where convergence rates were proven under more restrictive conditions on $K$ and $f$.

\begin{theorem}
  Let $K = \prod_{i=1}^{M} K_i$ be a product kernel with positive definite components and $\Omega = \bigtimes_{i=1}^M \Omega_i \subset \mathbb{R}^d$, where $\Omega_i \subset \mathbb{R}^{d_i}$.
  \begin{itemize}
    \item[(a)] 
    If $\left( X_n \right)_{n \in \mathbb{N}}$ is a sequence of grid-like subsets in $\Omega$, i.e.,
    \begin{align*}
      X_n = X^1_n \times ... \times X^M_n \qquad \text{ where } X^i_{n} \subset \Omega_i \mbox{ for } i = 1, \ldots , M,
    \end{align*}
    satisfying
    \begin{align*}
      P_{X_n^i} (x^i) \xrightarrow{n \to \infty} 0 \qquad \text{for } x^i \in \Omega_i, \; i = 1, ... , M, 
    \end{align*}
    then we have the convergence
    \begin{align*}
      \Vert f - s_{f,X_n} \Vert_K \xrightarrow{n \to \infty} 0 \qquad \text{for all } f \in \mathcal{H}_{K,\Omega}.
    \end{align*}
    \item[(b)] 
    Let $K_i$ be continuous and $\Omega_i$ be compact for $i=1, ... , M$. 
    If $\left( X_n \right)_{n \in \mathbb{N}}$ is chosen via the componentwise P-greedy algorithms from (\ref{Pgreedy_index}) and (\ref{Pgreedy_update}), 
    then we have the convergence
    \begin{align*}
      \Vert f - s_{f,X_n} \Vert_K \xrightarrow{n \to \infty} 0 \qquad \text{for all } f \in \mathcal{H}_{K,\Omega}.
    \end{align*}
  \end{itemize}
\end{theorem}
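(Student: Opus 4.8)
The plan is to reduce part (a) to the abstract normwise-convergence result of Lemma~\ref{new-lemma}~(a) by showing that the hypothesis $P_{X_n^i}(x^i)\to 0$ for each component forces $P_{X_n}(x)\to 0$ for every $x\in\Omega$; part (a) of Lemma~\ref{new-lemma} then immediately gives $\|f-s_{f,X_n}\|_K\to 0$ for all $f\in\mathcal{H}_{K,\Omega}$. The bridge is the identity of Lemma~\ref{power_product}: for grid-like $X_n$ we have
\begin{align*}
  \left(P_{X_n}(x)\right)^2 = \prod_{i=1}^M K_i(x^i,x^i) - \prod_{i=1}^M\left(K_i(x^i,x^i)-\left(P_{X_n^i}(x^i)\right)^2\right).
\end{align*}
Fixing $x\in\Omega$, each factor $K_i(x^i,x^i)-\left(P_{X_n^i}(x^i)\right)^2$ converges to $K_i(x^i,x^i)$ as $n\to\infty$ (the values $K_i(x^i,x^i)$ being fixed nonnegative constants once $x$ is fixed), so the second product converges to $\prod_i K_i(x^i,x^i)$, and hence $\left(P_{X_n}(x)\right)^2\to 0$. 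This uses only continuity of multiplication of finitely many real sequences, so part (a) is essentially a one-line consequence of the two lemmas.

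For part (b), I would first verify that the componentwise P-greedy algorithm produces, for each fixed $i$, either a finite point set $X^i_{n}$ that eventually stabilizes or an infinite sequence of points in $\Omega_i$; in the latter case I claim $\|P_{X_n^i}\|_{\infty,\Omega_i}\to 0$. The key observation is the selection rule~(\ref{Pgreedy_index}): at step $n$ we enlarge the component $i_n$ achieving the maximum of $\|P_{X_n^i}\|_{\infty,\Omega_i}$ over $i$. If some component $i$ were enlarged only finitely often, its power function norm $\|P_{X_n^i}\|_{\infty,\Omega_i}$ would be eventually constant and positive (it is positive whenever $X^i_n\subsetneq\Omega_i$, but even when $\Omega_i$ is infinite one must argue more carefully — see below), while by the pigeonhole principle at least one component $i^*$ is enlarged infinitely often, and for that component Lemma~\ref{new-lemma}~(b) applied to $K_{i^*}$ on the compact set $\Omega_{i^*}$ gives $P_{X_n^{i^*}}(x^{i^*}_{n+1})\to 0$ along the subsequence of steps where $i_n=i^*$; since the selected point realizes the supremum by~(\ref{Pgreedy_update}), this says $\|P_{X_n^{i^*}}\|_{\infty,\Omega_{i^*}}\to 0$ along that subsequence, and by the max-rule~(\ref{Pgreedy_index}) the norms of all other components are dominated, forcing $\|P_{X_n^i}\|_{\infty,\Omega_i}\to 0$ for every $i$ simultaneously. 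In particular $P_{X_n^i}(x^i)\to 0$ pointwise for each $i$, and part (b) follows from part (a).

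The main obstacle is making the "every component is eventually driven to zero" argument rigorous, in particular ruling out the degenerate scenario where the maximizing index $i_n$ oscillates in a way that leaves some component's power-function norm bounded away from zero. The clean way is to argue by contradiction: suppose $\limsup_n \max_i \|P_{X_n^i}\|_{\infty,\Omega_i} = 2\eta > 0$. Some fixed component $i^*$ attains the maximum for infinitely many $n$ with value $\geq\eta$; along that index subsequence the greedy rule~(\ref{Pgreedy_update}) inserts a point $x^{i^*}$ with $P_{X_n^{i^*}}(x^{i^*})\geq\eta$. But the points inserted into component $i^*$ over the whole run form a sequence in the compact set $\Omega_{i^*}$ to which Lemma~\ref{new-lemma}~(b) applies with $K=K_{i^*}$, giving $P_{X_m^{i^*}}(x^{i^*}_{m+1})\to 0$ as $m$ ranges over the insertion steps for $i^*$ — contradicting the lower bound $\eta$ (here one also notes that if $i^*$ is inserted into only finitely often, its norm is eventually $\|P_{X^{i^*}_n}\|_{\infty,\Omega_{i^*}}$ with $X^{i^*}_n$ fixed, and since that norm was $\geq\eta$ and the algorithm then always prefers some other component, a second component inherits the role — iterating, finitely many components cannot all absorb infinitely many insertions while keeping norm $\geq\eta$, again contradicting Lemma~\ref{new-lemma}~(b) for whichever component is hit infinitely often). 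This yields $\max_i\|P_{X_n^i}\|_{\infty,\Omega_i}\to 0$, hence pointwise decay of every component power function, and part~(a) closes the argument.
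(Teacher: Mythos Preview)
Your plan is correct and matches the paper's proof essentially step for step: part~(a) is obtained from Lemma~\ref{power_product} followed by Lemma~\ref{new-lemma}~(a), and part~(b) by combining pigeonhole with Lemma~\ref{new-lemma}~(b) and the max-selection rule~(\ref{Pgreedy_index}) to force $\Vert P_{X_n^i}\Vert_{\infty,\Omega_i}\to 0$ for every $i$, after which~(a) applies. Your contradiction framing in~(b) is a mild repackaging of the paper's direct argument that each $N_i:=\vert\{n\in\mathbb{N}\mid i_n=i\}\vert$ must be infinite, but the substance is identical; note also that once you choose $i^*$ to be a component with $i_n=i^*$ infinitely often among the steps where the maximum is $\geq\eta$, the parenthetical ``finitely often'' digression is unnecessary.
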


\begin{proof}
To prove statement~(a), we use the representation from Lemma~\ref{power_product},
according to which the decay of the component power functions leads to
\begin{align*}
    P_{X_{n}}(x) \xrightarrow{n \to \infty} 0 \qquad \text{ for all } x \in \Omega. 
\end{align*}
Therefore, the stated convergence follows directly from~Lemma~\ref{new-lemma}~(a).

To prove statement~(b), let $i_n$, for $n \in \mathbb{N}$,  be the update index from~(\ref{Pgreedy_index}).
We let
\begin{align*}
   N_i := \vert \lbrace n \in \mathbb{N} \mid i_n = i \rbrace \vert \in {\mathbb N} \cup \{\infty\} \qquad \text{ for } i = 1, ... , M.
\end{align*}
Suppose $N_i = \infty$. Then, the convergence $\Vert P_{X_n^i} \Vert_{\infty, \Omega_i} \longrightarrow 0$, for $n \to \infty$,   
follows from~Lemma~\ref{new-lemma}~(b), due to the definition of the selection rule, cf.~(\ref{Pgreedy_update}).
However, since the componentwise algorithm always chooses the update index with the largest $L^\infty$ norm 
of the power function, $N_i$ cannot be finite. This implies
\begin{align*}
     \Vert P_{X_n^i} \Vert_{\infty, \Omega_i} \xrightarrow{n \to \infty} 0 \qquad \text{for } i=1,...,M,
\end{align*}
so that the assertion follows from statement~(a).
\end{proof}
 
\begin{remark}
If the target function is a tensor product of the form~(\ref{f:tensor:product}), 
then its interpolant is also a tensor product, according to Corollary~\ref{tensor_interpolant2}.
Therefore, one could be tempted to transfer target-dependent algorithms like the \textit{f-greedy algorithm}~\cite{Schaback2000}
or the \textit{psr-greedy algorithm}~\cite{Dutta2021} to the single components. 
But the structure of the target function is usually unknown,
and so we cannot assume that the target function is a tensor product of the form~(\ref{f:tensor:product}).
\end{remark}

\section{Numerical results}\label{numeric}
For the purpose of illustration, we work with a two-dimensional numerical example, i.e., $d=2$, although the proposed method of this paper works for arbitrary dimensions $d$. 
Nevertheless, the following numerical experiments support our theoretical findings along with the effects that we have described.

Recall that a (bivariate) product kernel $K : \mathbb{R}^2 \times \mathbb{R}^2 \longrightarrow \mathbb{R}$ has the form
$$
    K(x,y) = K_1(x_1,y_1) \cdot K_2(x_2,y_2)
    \qquad \mbox{ for } x,y \in \mathbb{R}.
$$
To obtain $K$, we work with two types of radial compactly supported  component kernels 
$K_i : \mathbb{R} \times \mathbb{R} \longrightarrow \mathbb{R}$, $i=1,2$.
One is {\it Askey's radial charac\-teristic function} (cf.~\cite[Example~8.11]{Iske_Approx}),
$$
    \phi_{\beta}(r) = (1-r)_+^\beta
     \qquad \mbox{ for } r \geq 0,
$$
with parameter $\beta \geq 2$, the other is the $C^6$ {\em Wendland kernel} for dimension $d=1$ (computed from \cite[Corollary~9.14]{wendland}),
$$
    \phi_{1,3}(r) = (1 - r)^7_+  ( 315 r^3 + 285 r^2 + 105 r + 15).
    \qquad \mbox{ for } r \geq 0.
$$
The product of these two kernels yields the product kernel $K = \phi_\beta \cdot \phi_{1,3}$.

Further in our numerical experiments, we compare the (bivariate) pro\-duct kernel $K$ 
with Askey's kernel $\phi_8$ and Wendland's $C^6$ kernel (cf.~\cite[Table~9.1]{wendland})
$$
    \phi_{3,3}(r) = (1-r)_+^8 (32 r^3 + 25 r^2 +8r + 1)
    \qquad \mbox{ for } r \geq 0,
$$
each taken as a bivariate function. 
The kernels $\phi_{3,3}$, $\phi_8$ and $K$ are shown in Figure~\ref{fig:supportKernels}.

\begin{figure}[h!]
	\begin{center}
		\fbox{
			\includegraphics[width=11cm]{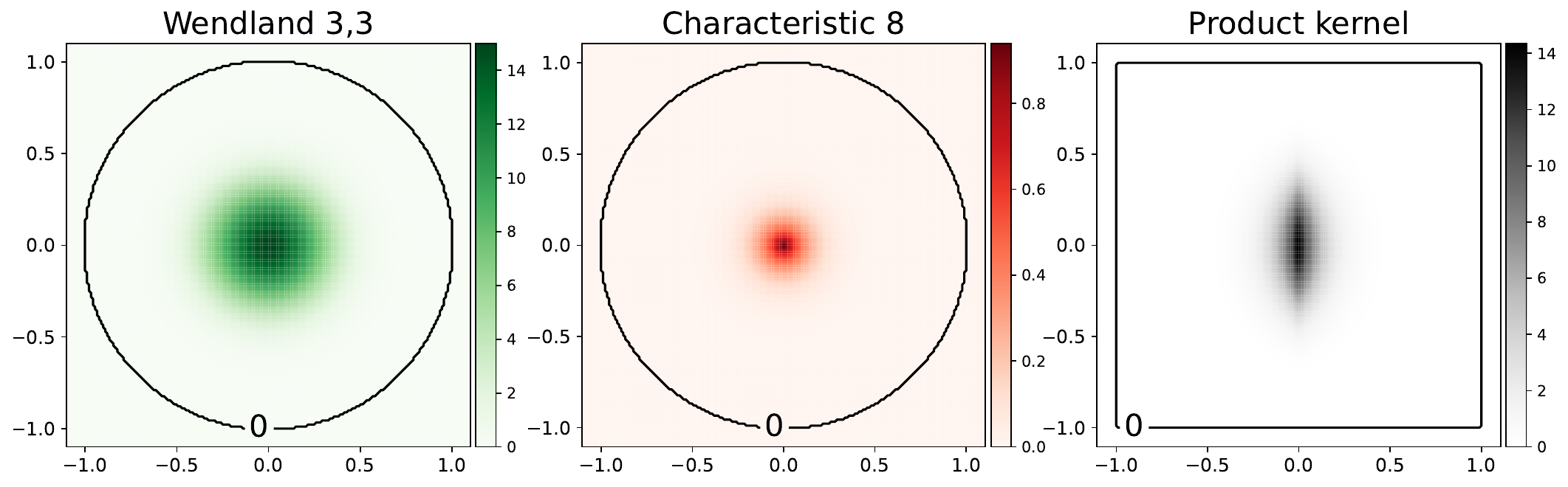}
		}
	\end{center}
	\caption{Kernels $\phi_{3,3}$, $\phi_8$, $K$ (left to right), and their support (black line).
	}
	\label{fig:supportKernels}
\end{figure}

Further in our setting, we work with grid-like sets of interpolation points ${X=X^1\times X^2} \subset {\mathbb R^2}$, 
where the mesh width of the equidistant points in $X^2 \subset {\mathbb R}$ are smaller than those of $X^1 \subset {\mathbb R}$,
see Figure~\ref{fig:targetF+Nodes} for one example. 
More precisely, in our numerical experiments we considered using data point sets $X_{i,j} = X_i \times X_j$, 
for $1\leq i \leq 4$ and $1\leq j \leq 8$, where
$$
    {X_j = \{2^{-j}k \mid k = 0,\dots,2^j\}\subset \mathbb{R}},
$$
and so we have ${|X_j| = 2^j +1}$ for the cardinalities of the component sets $X_j$, i.e.,
\begin{equation}
\label{component:set}
    \{ |X_j| \mbox{ {\bf :} } 1\leq j \leq 8 \} = \{ 3, 5, 9, 17, 33, 65, 129, 257 \}.
\end{equation}

\begin{figure}[h!]
	\begin{center}
		\fbox{
			\includegraphics[width=8cm]{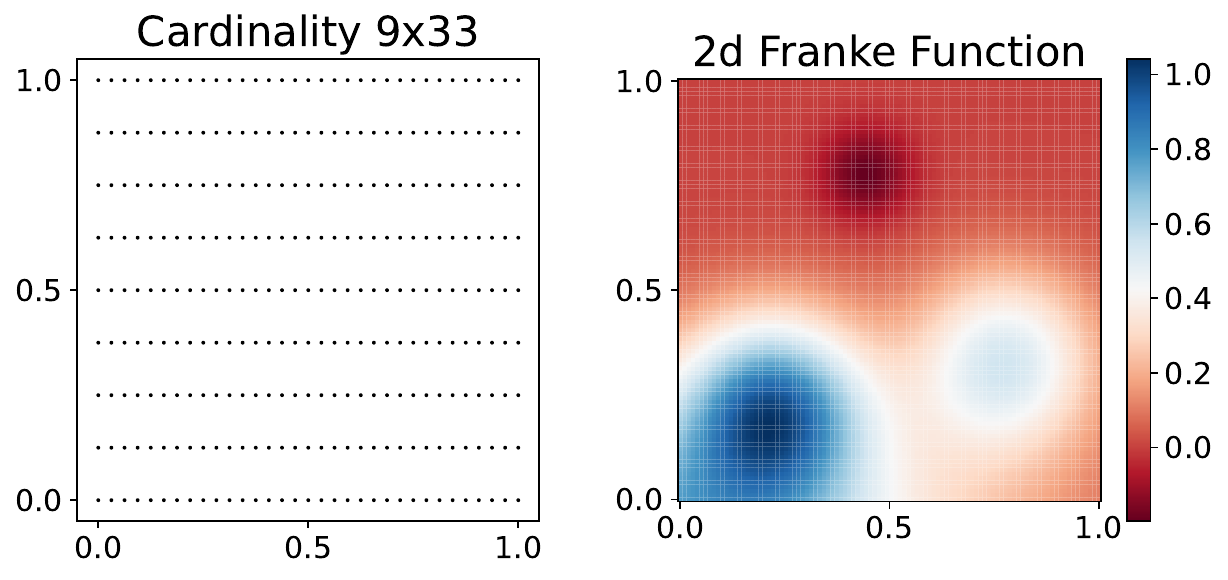}
		}
	\end{center}
	\caption{Grid-like data $X_{3,5} = X_3 \times X_5$ (left) and Franke's function (right).
	}
	\label{fig:targetF+Nodes}
\end{figure}

For interpolation on the sets $X_{i,j} = X_i \times X_j$, we used
{\em Franke's function} 
\begin{eqnarray*}
\lefteqn{ 
f(x,y):=
}\\ && 
0.75 \exp \left( - \frac{(9x-2)^2 + (9y - 2)^2}{4} \right)
+\; 0.75 \exp \left( - \frac{(9x+1)^2}{49} - \frac{(9y + 1)^2}{10}\right)
\\ &&
+\; 0.5 \exp\left( - \frac{(9x - 7)^2 + (9y - 3)^2}{4}\right) 
-\; 0.2 \exp\left( -  (9x - 4)^2 - (9y - 7)^2  \right),
\end{eqnarray*}
as target. 
But we also used its restriction $f(\cdot ,0.25) : {\mathbb R} \longrightarrow {\mathbb R}$ for experiments concerning univariate interpolation.  
We remark that Franke's function~\cite{franke1982scattered}, 
shown in Figure~\ref{fig:targetF+Nodes}, is a popular test case in scattered data approximation.

\subsection{Numerical stability versus approximation behaviour}

Now let us turn to our numerical comparisons,
where we start with numerical stability.
We recorded the spectral condition numbers ${\rm cond}_2(A_{K,X})$ in (\ref{condition:number}),
for the kernels $\phi_{1,3}$ and $\phi_8$, each on the eight component sets $X_j$ in (\ref{component:set}), for $j=1,\ldots,8$.
The spectral condition numbers ${\rm cond}_2(A_{K_i,X_j})$ of the kernel matrices $A_{K_i,X_j}$, for $i=1,2$ and $j=1,\ldots,8$, 
are shown in Figure~\ref{fig:1dCondError} (left).

\begin{figure}[h!]
	\begin{center}
		\fbox{
			\includegraphics[width=12cm]{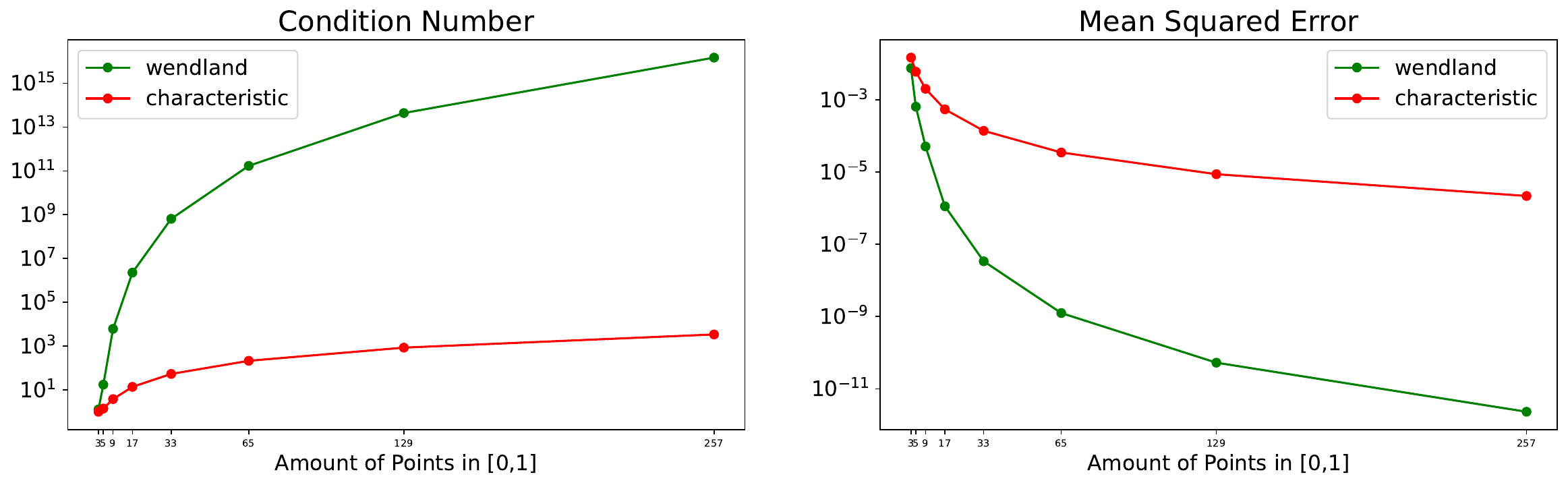}
		}
	\end{center}
	\caption{Comparison between kernels $\phi_{1,3}$ and $\phi_8$.
	Condition number of kernel matrices on $X_j$ in (\ref{component:set}), $j=1,\ldots,8$ (left); mean square error (right).}
	\label{fig:1dCondError}
\end{figure}

As regards their approximation behaviour, we have, for both test cases, also recorded the resulting {\em mean square error (MSE)},
as shown in Figure~\ref{fig:1dCondError} (right). 

Note that the observed behaviour in Figure~\ref{fig:1dCondError}
complies with Schaback's uncertainty relation~\cite{schaback1995error}, i.e., the Askey kernel $\phi_8$
leads, in comparison with the Wendland kernel $\phi_{1,3}$, to smaller condition numbers, but the mean square errors of $\phi_8$ are,
on each set $X_j$, $j=1,\ldots,8$, larger than those of $\phi_{1,3}$.

\medskip
Next we demonstrate how we can combine the two kernels $\phi_8$ and $\phi_{1,3}$ to obtain a problem-adapted product kernel for bivariate interpolation. 
We continue to work with the component sets $X_j$ in (\ref{component:set}) to build grid-like point sets of the form $X = X^1 \times X^2$, as already explained in the outset of this section.

We take the bivariate product kernel 
$$
    K(x,y) = K_1(x_1,y_1) \cdot K_2(x_2,y_2)
    \qquad \mbox{ for } x = (x_1,x_2), y = (y_1,y_2) \in {\mathbb R}^2,
$$
where we let $K_1(x_1,y_1) = \phi_8(|x_1 - y_1|)$ and $K_2(x_2,y_2) = \phi_{1,3}(|x_2 - y_2|)$. 
This particular choice is well-motivated by the result of Theorem~\ref{product_eigenvalues}, 
especially since numerical stability is a critical issue, cf.~Figure~\ref{fig:1dCondError} (left).

\begin{figure}[h!]
	\begin{center}
		\fbox{
			\includegraphics[width=11cm]{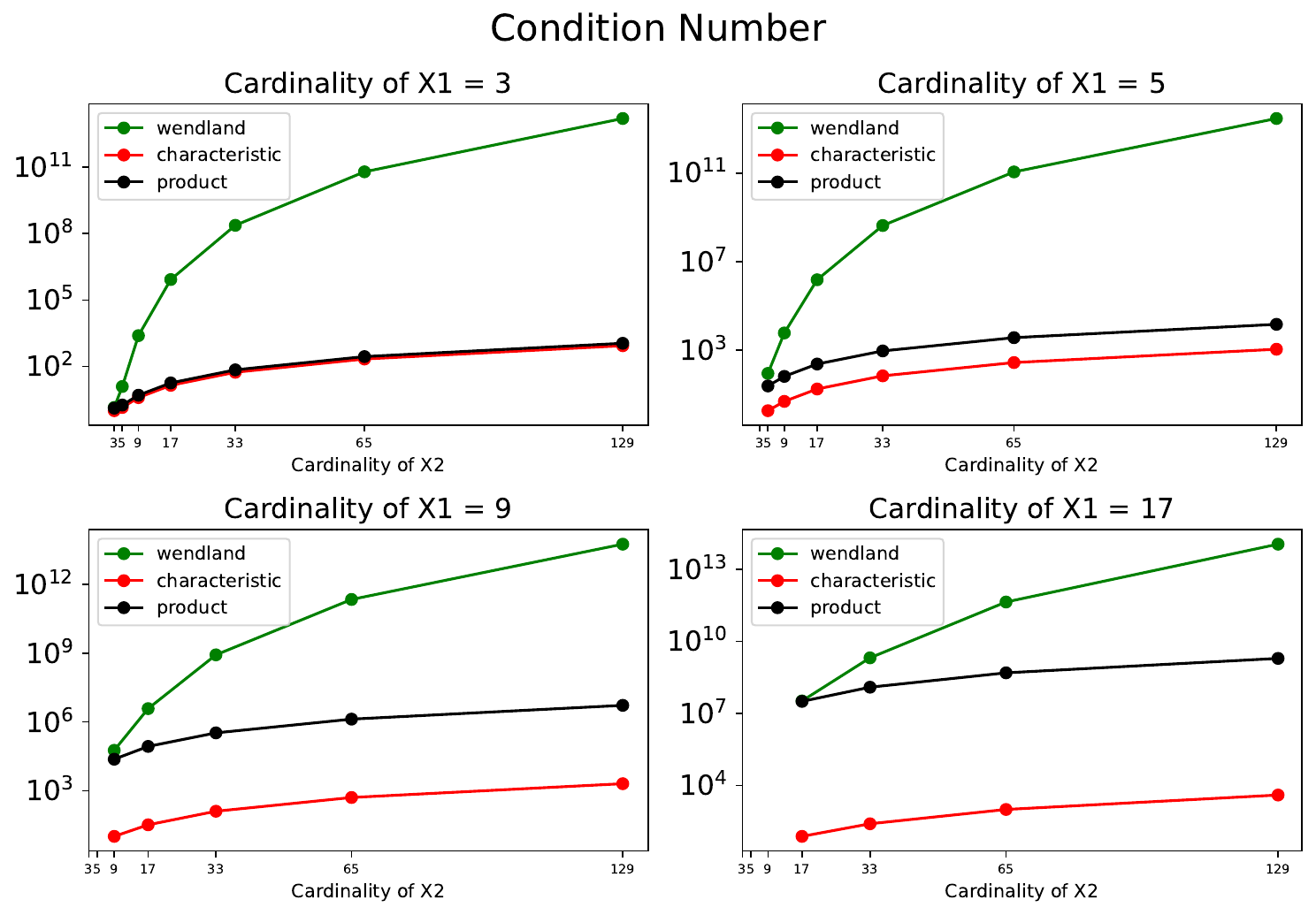}
		}
	\end{center}
	\caption{Condition numbers from kernels $K_1$, $K_2$ and $K$ on $X^1 \times X_2$.
	}
	\label{fig:CompCondition}
\end{figure}

Figure~\ref{fig:CompCondition} shows the spectral condition number for the product kernel $K$ on a sequence of grid-like point sets $X = X^1 \times X^2$.
In comparison, Figure~\ref{fig:CompCondition} also shows the condition number for both kernels $\phi_8$ and $\phi_{3,3}$, each taken as 
a {\em bivariate} radial kernel.
As can be seen in Figure~\ref{fig:CompCondition}, the condition numbers of the product kernel increase at about the same rate as that of the Askey kernel $\phi_8$.

For all test cases, we have also recorded the resulting mean square error.
Our numerical results are shown in Figure~\ref{fig:CompError}.
Note that the approximation behaviour of the product kernel $K$ is superior to that of the Askey kernel $\phi_8$.
Moreover, the mean square errors from the product kernel $K$ decrease, 
at increasing cardinality $|X_2|$, towards the mean square error resulting from the Wendland kernel $\phi_{3,3}$.

\begin{figure}[h!]
	\begin{center}
		\fbox{
			\includegraphics[width=11cm]{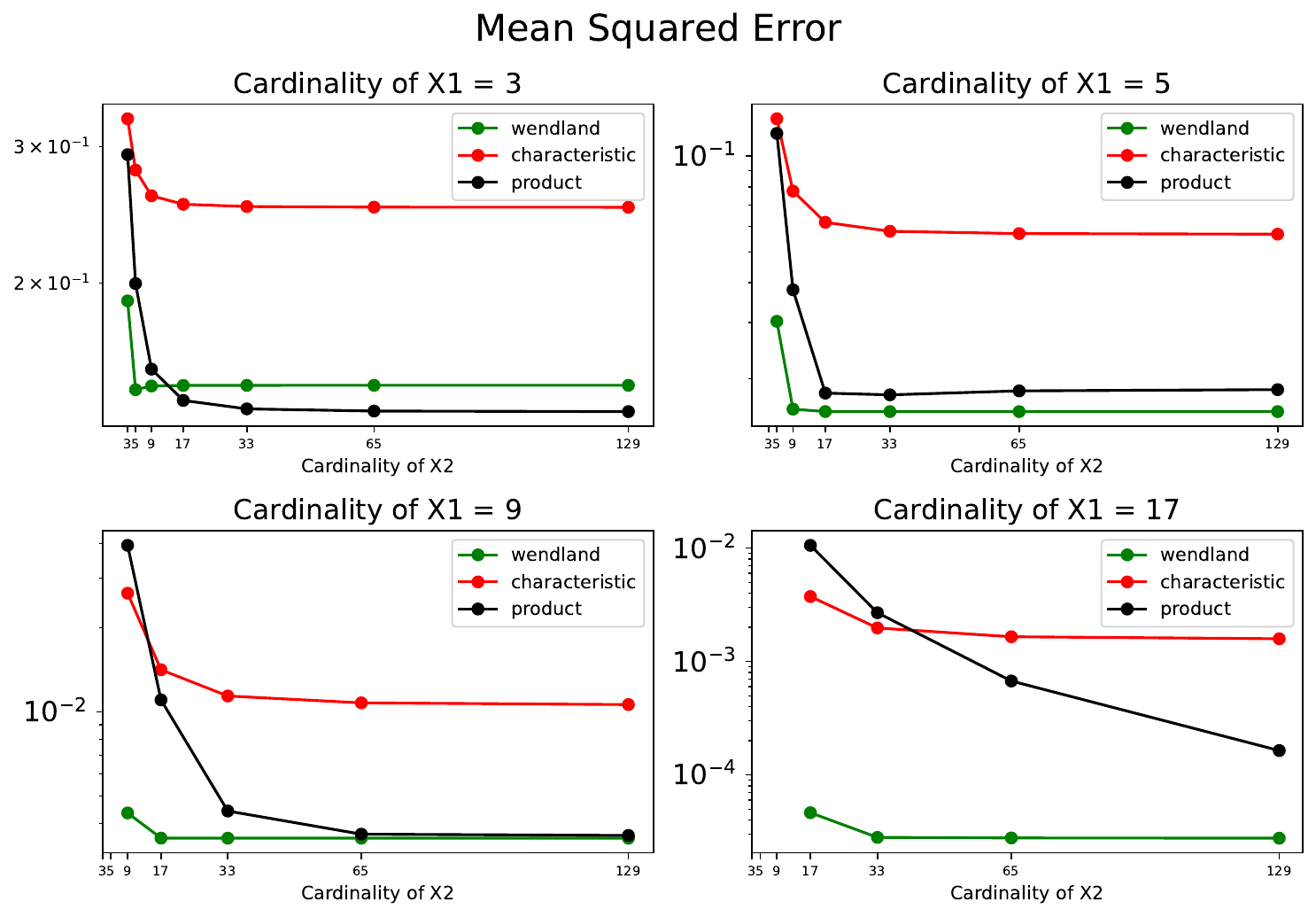}
		}
	\end{center}
	\caption{Mean square errors from kernels $K_1$, $K_2$ and $K$ on $X^1 \times X_2$.}
	\label{fig:CompError}
\end{figure}

From our numerical results in Figures~\ref{fig:CompCondition} and~\ref{fig:CompError}, 
we can conclude that the proposed product kernel $K(x,y) = K_1(x_1,y_1) \cdot K_2(x_2,y_2)$ achieves to combine
the advantages of the Askey kernel $K_1(x_1,y_1) = \phi_8(|x_1 - y_1|)$ (better stability) 
and the Wendland kernel $K_2(x_2,y_2) = \phi_{1,3}(|x_2 - y_2|)$ (smaller mean square error).

\subsection{Computional efficiency}
We finally discuss the computational efficiency of the proposed tensor methods in Section~\ref{Newton_basis}.
To this end, we consider using a product of two univariate radial characteristic functions to interpolate the bivariate Franke function.
This is done by using four different kernel methods to solve the interpolation problem.

\begin{itemize}
	\item \texttt{standard}: 
	Compute the matrix $A_{K,X}$, then solve the system in (\ref{eq:intpol_system}).
	\item \texttt{kronecker\_prod}: 
	Compute $A_{K,X}$ via the Kronecker product.
	This is done by exploi\-ting the tensor structure as proven by Theorem~\ref{thm:interpoMatrixAsKronecker}.
	Computing the much smaller component matrices $A_{K_i,X^i}$, we get $A_{K,X}$. 
        Then, solve the linear system in (\ref{eq:intpol_system}) as in \texttt{standard}.
	\item \texttt{Newton\_base}: 
	Compute the Newton basis iteratively, update the kernel interpolant at each iteration step.
	\item \texttt{TensorNewton\_Base}: 
	Compute the Newton bases in each component space.
	Exploiting the tensor structure (cf.~Section~\ref{Newton_basis}),
	combine the two bases to obtain the Newton basis of the product kernel. 
	This yields the product kernel interpolant by the solution of the corresponding linear system.
\end{itemize}

We used equidistant $N \times N$ grids in ${\left[ -1, 1 \right] \times \left[ -1, 1 \right]}$, as interpolation points, for varying sizes $N$.
For each of the four above mentioned methods and grid sizes $N$, we recorded the mean computation time (from five runs).
The recorded seconds of CPU times are shown, as functions in $N$, in Figure~\ref{fig:time}.

\begin{figure}[h!]
	\begin{center}
		\fbox{
			\includegraphics[scale=0.3]{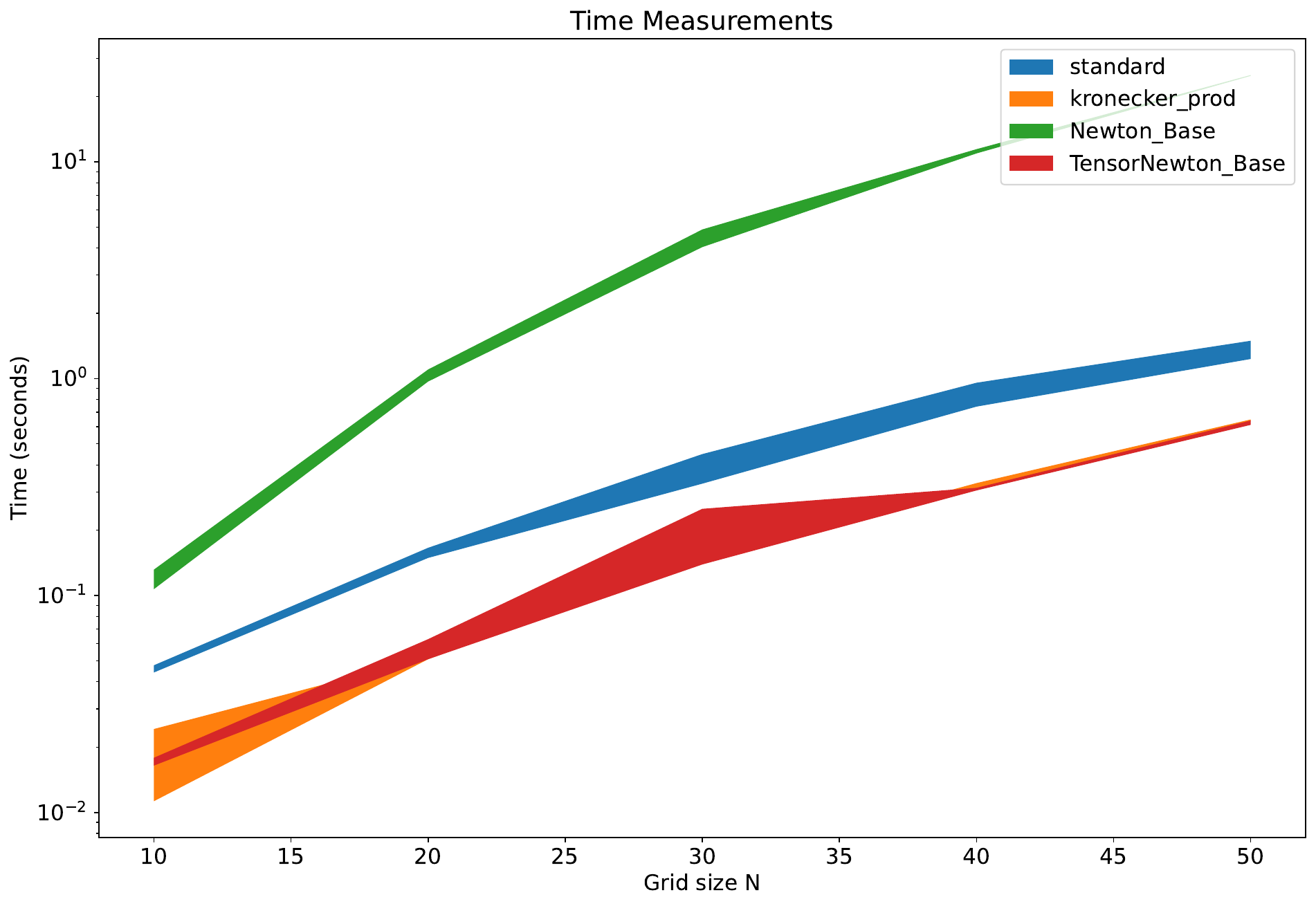}
		}
	\end{center}
	\caption{Computation time (in CPU seconds) as functions of grid size $N$,
	where both the mean values (from 5 test runs) and their standard deviations were recorded,
	as they are displayed by using a standard {\em "fill between plot"}
        (cf.\ {\scriptsize{\tt https://matplotlib.org/stable/api/\_as\_gen/matplotlib.pyplot.fill\_between.html}}).
	}
	\label{fig:time}

\end{figure}

Note from Figure~\ref{fig:time}, that the proposed tensor methods \texttt{kronecker\_prod} and \texttt{TensorNewton\_Base} 
are much more efficient than the corresponding two standard methods \texttt{standard} and \texttt{Newton\_base}.
In fact, the tensor Newton basis method \texttt{TensorNewton\_Base} outperforms the standard Newton basis method \texttt{Newton\_base} quite significantly,
especially for larger data sets. 

In order to further explain on this, we remark that the computational costs for the method \texttt{Newton\_base} 
include the required calculations of the Newton basis for each grid point, just before the interpolant is then being updated. 
This then explains why the method \texttt{Newton\_base} can only be inferior for just {\em one} interpolation update.
In the long run, i.e., for {\em several} interpolation updates, storage of the Newton bases pays off, in which case standard methods would be inferior.

Decomposing the interpolation point set into its univariate components, leads to a significant data reduction, which explains the
superiority of the tensor method with respect to computational complexity.
This gives the proposed tensor product method yet another advantage.

\section{Conclusion and Future Research}
We have proposed product kernels to obtain problem-adapted efficient and flexible tools for high-dimensional scattered data interpolation.
Moreover, we proved by Theorem~\ref{thm:posdef} that tensor products of 
positive definite component kernels 
(that are not necessarily translational-invariant) yield 
positive definite product kernels.
The utility of tensor product kernels is further supported by our numerical results in Section~\ref{numeric}, 
where we have demonstrated not only their efficiency, but also their ability to balance numerical stability against approximation quality.

In view of future research, we wish to propose two possible directions.
Firstly, the generalization of kernel-based approximation methods from the Euclidean space ${\mathbb R}^d$ 
to other metric spaces (e.g.~to Riemannian manifolds), or to even more general topological spaces, has been
studied in various different contexts. 
Machine learning algorithms, for instance, require the construction of approximation schemes for
graphs $\Omega_1 = (V, E)$, with nodes $V$ and edges $E$, and corresponding node features 
in $\Omega_2 \subset {\mathbb R}^d$. In this case, one would like to approximate a target function 
$f : V \times {\mathbb R}^d \longrightarrow {\mathbb R}$ from given values on some of the nodes,
where product kernels are highly relevant. 

Secondly, the applicability of tensor product kernels should be explored for
high dimensions $d = d_1 + \ldots + d_M$, $M \gg 1$, or for infinite dimensional input data, 
where only a few $d_i$'s are contributing to the output. 
To be more concrete, this is relevant in situations, where
(a) target functions are assumed to have a certain low-dimensional structure \cite{rieger2024},
(b) dimensions are down-weighted to zero \cite{fasshauer2012}, or
(c) the input dimensions are selected in a greedy-like fashion \cite{camattari2024}.

\section*{Acknowledgment}
The authors acknowledge the support by the Deutsche Forschungsgemeinschaft (DFG) within the Research Training Group GRK 2583 "Modeling, Simulation and Optimization of Fluid Dynamic Applications". 
We wish to thank Tizian Wenzel for several fruitful discussions. 
His critical remarks and constructive suggestions led to significant improvements over a previous version.
We wish to thank one anonymous referee for many useful suggestions,
especially for those concerning generalizations and the applicability of tensor product kernels. 
The latter inspired us to add Corollary~4.10 and to include a short discussion on future research. 


\end{document}